\newcommand {\mbb}{\mathbb}
\newcommand {\be}{\begin{equation}}
\newcommand {\ee}{\end{equation}}
\newtheorem {thm}{Theorem}
\newtheorem {cor}[thm]{Corollary}
\newtheorem {lm}[thm]{Lemma}
\newtheorem {prop}[thm]{Proposition}
\newtheorem {prob}[thm]{Problem}
\theoremstyle{definition}
\theoremstyle{definition}
\theoremstyle{definition}
\theoremstyle{definition}
\renewcommand{\frac}{\dfrac}
\begin{document}

\begin{abstract}
We demonstrate that if $p\in\mathbb{R}[x]$ and $p$ is not an even function, then $\{p(k)\}_{k=0}^{\infty}$ is not a multiplier sequence for the basis of Chebyshev polynomials of the first kind. We also give a characterization of geometric multiplier sequences for the Chebyshev basis. 
\end{abstract}

\title{Parity of Polynomial Multiplier Sequences for the Chebyshev Basis}
\author{Andrzej Piotrowski and Joshua Shterenberg}
\maketitle 

\section{Introduction}\label{intro}
A polynomial is \textit{hyperbolic} if it either has only real zeros or if it is identically zero. An operator $K$ on the vector space of real polynomials $\mathbb{R}[x]$ is called \textit{hyperbolicity preserving} if $K(p)$ is hyperbolic whenever $p$ is hyperbolic. Let $B=\{b_k(x)\}_{k=0}^{\infty}$ be a basis for $\mathbb{R}[x]$. To any sequence of real numbers $\{\gamma_k\}_{k=0}^{\infty}$, we can associate an operator $K$ on the vector space $\mbb{R}[x]$ defined by 
\begin{equation}\label{cms}
{K}\left[\sum_{k=0}^{n} a_kb_k(x)\right]=\sum_{k=0}^{n}\gamma_ka_kb_k(x).
\end{equation}
If the operator (\ref{cms}) is hyperbolicity preserving, then the sequence $\{\gamma_k\}_{k=0}^{\infty}$ is called a \textit{multiplier sequence for the basis $B$}, or a \textit{$B$-multiplier sequence}. 

The characterization of multiplier sequences for various orthogonal bases for $\mathbb{R}[x]$ and the research thereof has been of particular interest within the last many years. It appears this line of research was first developed by Tur\'an (see \cite{Turan} and \cite{Turanstrips}), who used this theory to investigate the famous Riemann hypothesis. For this reason, among others, the theory of multiplier sequences is being developed to this day. In this work, we focus our efforts on multiplier sequences for the Chebyshev polynomials of the first kind (we will simply refer to them as Chebyshev polynomials), $\{T_n\}_{n=0}^{\infty}$,  which can be defined as the unique polynomials satisfying $T_n(\cos(\theta))=\cos(n\theta)$ for $n\in\mathbb{N}\cup\{0\}$.

\section{Related Works}
This paper serves as a continuation of several works which analyze the multiplier sequences of Jacobi polynomials and/or special cases of these, such as the Legendre polynomials. \cite{B} contains a characterization of quadratic Jacobi (and, thus, Chebyshev) multiplier sequences.  \cite{BDFU} contains an analysis of linear, quadratic, and geometric multiplier sequences for the Legendre polynomials. \cite{nreup} involves the study of complex zero decreasing operators and demonstrates the existence of a class of multiplier sequences for the Chebyshev basis. \cite{PILMS} contains a proof that polynomial multiplier sequences for the Legendre basis must have the form $\{h(k^2+k)\}_{k=0}^{\infty}$, where $h\in\mathbb{R}[x]$. \cite{FHMS} contains a proof of the nonexistence of cubic multiplier sequences for the Legendre basis. \cite{FTW} looks more broadly at bases of simple sets and investigates when a set of multiplier sequences for one basis will be included in the set of multiplier sequences for another basis. In \cite{Y}, it is shown that there are no linear  multiplier sequences for the Jacobi basis. This collection of works provides a context for our research, as well as a standard of proofs and approaches to use, which we apply to the Chebyshev basis.   

\section{Computational Tools}
Mathematica, Sage, Wolfram Alpha, and the Python programming language were the central software elements used as investigative tools in this project. They were employed to discover and verify conjectures that led to our main results. All programming was written, compiled, and executed in a local development environment, but is public to view and to download on GitHub: \color{blue}\url{https://github.com/joshshterenberg/Chebyshev_MS}\color{black}.

\section{The symbol of a linear operator }
In this section, we develop a condition that must be met by a hyperbolicity preserving operator that will be useful to us later on. The argument is similar to that in section 3 (``Form and Order'') of \cite{PILMS}, which we outline here for the convenience of the reader. 

For a general linear operator $K$, the symbol of $K$ is given by 
\[G_{K}(x,y)= \sum_{k=0}^{\infty} \frac{(-1)^k   K[x^k] y^k}{k!} =   K[e^{-x y}], 
\]
where the operator $  K$ acts on $e^{-xy}$ as a function of $x$ alone (see \cite{bb} for a comprehensive treatment of the symbol of an operator as it relates to hyperbolicity preserving operators). Furthermore, any linear operator on $\mbb{R}[x]$ can be represented as a differential operator. As in, for example, \cite[Prop. 29]{P}, there is a unique set of real polynomials $\{Q_k\}$ such that
\begin{equation}\label{diffop}
  K = \sum_{k=0}^{\infty} Q_k(x) D^k \qquad \left(D = \frac{d}{dx}\right).
\end{equation}
It follows that the symbol $G_{  K}$ is given by
\begin{equation}\label{GK}
G_{  K}(x,y) =\sum_{k=0}^{\infty} Q_k(x) D^k  e^{-xy}= e^{-xy} \sum_{k=0}^{\infty} Q_k(x) (-1)^k y^k.
\end{equation}
As noted in \cite{BO} and \cite{FHMS}, if the operator $K$ is hyperbolicity preserving then we can act on the function $G_K$ in equation (\ref{GK}) as a function of $x$ alone by the multiplier sequence (for the standard basis) $\{1, 0, 0, 0, \dots\}$ and the resulting function,
\begin{equation}\label{G(0,y)}
G_{  K}(0,y) = \sum_{k=0}^{\infty} Q_k(0) (-1)^k y^k,
\end{equation}
must belong to the Laguerre-P\'olya class, defined as follows: 
The \textit{Laguerre-P\'olya Class}, denoted $\mathcal{L-P}$, is the set of entire functions $\varphi$ which can be expressed in the form
$$\varphi(x) = cx^me^{-ax^2 + bx}\prod_{k=1}^\omega\left(1+\frac{x}{x_k}\right)e^{-x/x_k} \text{\hspace{5 mm}} (0\le\omega\le\infty),$$
where $b,c,x_k \in \mathbb{R}$, $m$ is a non-negative integer, $a\ge 0$, $x_k\neq 0$, and 
$\sum_{k=1}^\omega x_k^{-2}<\infty$. 

To obtain the values of $Q_k(0)$ in equation ($\ref{G(0,y)}$), we use the following lemma which is readily verified by calculating $  K[x^n]$ and evaluating the resulting expression for $x=0$. 
\begin{lm} \label{Q0lem}
For any differential operator
\[  K = \sum_{k=0}^{\infty} Q_k(x)   D^k \qquad \left(D = \frac{d}{dx}\right)\]
on $\mbb{R}[x]$, the coefficient polynomials evaluated at 0 can be computed by 
\[Q_k(0) =  \frac{1}{k!} \left[  K[x^k]\right]_{x=0} \qquad (k=0, 1, 2, \dots).\]
\end{lm}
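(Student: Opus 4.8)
The plan is to verify the formula by a direct computation: apply $K$ to the monomial $x^n$, then evaluate at $x = 0$, exactly as the surrounding text suggests. First I would use the representation $K = \sum_{k=0}^{\infty} Q_k(x) D^k$ together with the elementary fact that $D^k[x^n] = \frac{n!}{(n-k)!}\, x^{n-k}$ for $0 \le k \le n$ and $D^k[x^n] = 0$ for $k > n$. In particular, the series collapses to a finite sum when acting on the fixed polynomial $x^n$, so no convergence questions arise, and I obtain
\[
K[x^n] = \sum_{k=0}^{n} \frac{n!}{(n-k)!}\, Q_k(x)\, x^{n-k}.
\]

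Next I would evaluate this expression at $x = 0$. The crucial point is that every summand with $k < n$ contains the factor $x^{n-k}$ with $n - k \ge 1$, and so vanishes at the origin irrespective of the value of $Q_k(0)$; only the $k = n$ term, namely $n!\, Q_n(x)$, survives. Hence $\left[K[x^n]\right]_{x=0} = n!\, Q_n(0)$, and dividing by $n!$ and then relabeling $n$ as $k$ yields the stated identity.

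I do not anticipate a genuine obstacle, as the result is essentially bookkeeping. The single subtlety worth flagging is that each $Q_k$ is a polynomial rather than a constant, so one must argue that its higher-order terms do not contribute at $x = 0$; this is immediate once one notes that they are attached to the vanishing factor $x^{n-k}$. This observation is precisely what makes the lemma ``readily verified,'' and it is what isolates the constant term $Q_k(0)$ from the rest of the coefficient polynomial, as required for the subsequent evaluation of $G_K(0,y)$ in equation (\ref{G(0,y)}).
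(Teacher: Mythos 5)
Your computation is correct and is exactly the verification the paper has in mind: the text introduces the lemma as ``readily verified by calculating $K[x^n]$ and evaluating the resulting expression for $x=0$,'' which is precisely your argument that only the $k=n$ term survives at the origin. No discrepancies to report.
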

Thus, we arrive at the following proposition which will be central to our analysis of polynomial sequences of odd degree:

\begin{prop}\label{Glp}
If $  K$ is a linear operator on $\mbb{R}[x]$ and 
\[
G_{  K}(0,y) = \sum_{k=0}^{\infty} Q_k(0) (-1)^k y^k \qquad \left(Q_k(0) =  \frac{1}{k!} \left[  K[x^k]\right]_{x=0}\right)
\]
does not belong to the Laguerre-P\'olya class, then $  K$ is not hyperbolicity preserving.
\end{prop}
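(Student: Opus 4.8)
The plan is to prove the contrapositive: assuming $K$ is hyperbolicity preserving, I would show that $G_K(0,y)$ belongs, as a function of $y$, to the Laguerre-P\'olya class. Two inputs drive the argument. First, by the classical theorem of Laguerre and P\'olya, $\lp$ is exactly the set of entire functions that arise as locally uniform limits of hyperbolic polynomials. Second, $\{1,0,0,\dots\}$ is a multiplier sequence for the standard basis -- it sends every polynomial to its constant term, which is hyperbolic -- and, acting in a single variable, its effect coincides with setting that variable equal to $0$, an operation under which the two-variable Laguerre-P\'olya class is closed.

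The substantive step is to show that the symbol of a hyperbolicity preserver is well behaved. Fix a real number $y$ and put $p_n(x)=\left(1-\tfrac{xy}{n}\right)^{n}$; each $p_n$ is hyperbolic, and the $p_n$ converge locally uniformly, along with their derivatives, to $e^{-xy}$. Since $K$ is hyperbolicity preserving, every $K[p_n]$ is hyperbolic, and once the interchange of $K$ with the limit is justified, (\ref{GK}) yields $K[p_n]\to K[e^{-xy}]=G_K(x,y)$. Invoking the symbol calculus for hyperbolicity preservers from \cite{bb}, \cite{BO}, and \cite{FHMS}, this places the symbol $G_K(x,y)$ in the two-variable Laguerre-P\'olya class.

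It then remains to specialize to $x=0$ without leaving the class, which is exactly the role assigned to $\{1,0,0,\dots\}$. Writing $G_K(x,y)=\sum_k c_k(y)x^k$, acting in the variable $x$ annihilates every term with a positive power of $x$ and retains $c_0(y)$, so the image is
\[
G_K(0,y)=\sum_{k=0}^{\infty}Q_k(0)(-1)^k y^k, \qquad Q_k(0)=\frac{1}{k!}\left[K[x^k]\right]_{x=0},
\]
where the coefficients are those identified in Lemma \ref{Q0lem} and (\ref{G(0,y)}). Because this specialization preserves the two-variable Laguerre-P\'olya class and the output depends on $y$ alone, it lies in $\lp$, which is the contrapositive we wanted.

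The step I expect to be the main obstacle is the middle paragraph: legitimizing $K[p_n]\to G_K$ and confirming that the limiting symbol truly lands in the Laguerre-P\'olya class. Since $K$ may be an infinite-order differential operator in the sense of (\ref{diffop}), exchanging it with the limit is not a term-by-term triviality but calls for a continuity or normal-families argument, and Hurwitz's theorem must be applied with care to exclude the degenerate possibility that the limit vanishes identically. These are precisely the technical points supplied by the symbol machinery of \cite{bb}, \cite{BO}, and \cite{FHMS}, on which I would lean.
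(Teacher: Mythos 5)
Your proposal is correct and follows essentially the same route as the paper: the paper gives no free-standing proof of Proposition \ref{Glp}, but derives it as the contrapositive of the same two ingredients you use, namely the symbol machinery for hyperbolicity preservers cited from \cite{bb}, \cite{BO}, and \cite{FHMS} (application of the multiplier sequence $\{1,0,0,\dots\}$ in the variable $x$ to land $G_K(0,y)$ in $\mathcal{L-P}$) together with Lemma \ref{Q0lem} for the coefficients $Q_k(0)$. Your extra sketch of the approximation of $e^{-xy}$ by the hyperbolic polynomials $\left(1-\tfrac{xy}{n}\right)^n$ is a reasonable gloss on what those references supply, and you correctly flag the limit-interchange as the point that must be outsourced to them.
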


\section{Specializing to the Chebyshev Polynomials}
We now specialize the results of the last section to investigate multiplier sequences for the Chebyshev polynomial basis.

\begin{prop}\label{Glem}
Let $\{\gamma_k\}_{k=0}^{\infty}$ be a sequence of real numbers, and let $K$ be the corresponding linear operator on $\mbb{R}[x]$ defined by 
\begin{equation}\label{Tms}
{K}\left[\sum_{k=0}^{n} a_kT_k(x)\right]=\sum_{k=0}^{n}\gamma_ka_kT_k(x),
\end{equation}
where $T_k$ denotes the $k$th Chebyshev polynomial. Let $Q_{2k}(0)$ be defined by 
\begin{equation}\label{Q2k}
Q_{2k}(0) = \frac{2^{1-2k}}{(2k)!}\left[\frac{1}{2}\binom{2k}{k} \gamma_0+\sum_{i=1}^{k}(-1)^i\binom{2k}{k-i}\gamma_{2i}  \right].
\end{equation}
If
\[
G_{  K}(0,y) = \sum_{k=0}^{\infty} Q_{2k}(0) y^{2k}
\]
does not belong to the Laguerre-P\'olya class, then $  K$ is not a hyperbolicity preserving operator. Thus, $\{\gamma_k\}_{k=0}^{\infty}$ is not a multiplier sequence for the Chebyshev basis.
\end{prop}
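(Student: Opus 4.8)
The plan is to reduce the statement to Proposition \ref{Glp} by computing the coefficients $Q_k(0)$ explicitly for the Chebyshev operator $K$ defined in (\ref{Tms}). By Lemma \ref{Q0lem}, $Q_k(0) = \frac{1}{k!}\left[K[x^k]\right]_{x=0}$, so the whole task reduces to applying $K$ to the monomial $x^k$ and evaluating at $x=0$. Since $K$ acts diagonally on the Chebyshev basis, the first step is to expand each monomial in that basis.

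I would invoke the classical expansion of a monomial in Chebyshev polynomials of the first kind. For an even power $n = 2k$ this reads
\[
x^{2k} = 2^{1-2k}\left[\frac{1}{2}\binom{2k}{k}T_0(x) + \sum_{j=0}^{k-1}\binom{2k}{j}T_{2k-2j}(x)\right],
\]
while the expansion of an odd power $x^{2k+1}$ involves only odd-indexed Chebyshev polynomials. (Either may be verified by a short induction from the recurrence $2xT_m(x) = T_{m+1}(x)+T_{m-1}(x)$, or cited from a standard reference.) Applying $K$ multiplies the coefficient of each $T_m$ by $\gamma_m$, and I would then evaluate at $x=0$ using $T_m(0) = \cos\left(\frac{m\pi}{2}\right)$; the key fact is that this vanishes for odd $m$ and equals $(-1)^{j}$ when $m = 2j$.

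Two consequences follow. Because $K[x^{2k+1}]$ is a combination of odd-indexed Chebyshev polynomials, each vanishing at $0$, we obtain $Q_{2k+1}(0) = 0$, so $G_K(0,y)$ carries only even powers of $y$. For the even powers, substituting $T_{2k-2j}(0) = (-1)^{k-j}$ into the applied expansion and reindexing by $i = k-j$ produces precisely formula (\ref{Q2k}). Finally, since $(-1)^{2k} = 1$, the series $G_K(0,y) = \sum_{k} Q_k(0)(-1)^k y^k$ of Proposition \ref{Glp} collapses to $\sum_{k} Q_{2k}(0)\, y^{2k}$; hence, if this function lies outside $\lp$, Proposition \ref{Glp} forces $K$ to fail hyperbolicity preservation, and $\seq$ is not a Chebyshev multiplier sequence.

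The termwise application of $K$ and the evaluation $T_m(0) = \cos(m\pi/2)$ are routine. The main point demanding care is the bookkeeping in the monomial expansion: the halving of the $T_0$ term and the reindexing $j \mapsto k-i$ must be carried out so that the binomial coefficients $\binom{2k}{k-i}$ and the signs $(-1)^i$ line up exactly with the stated formula (\ref{Q2k}).
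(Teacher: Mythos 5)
Your proposal is correct and follows essentially the same route as the paper's proof: reduce to Proposition \ref{Glp} via Lemma \ref{Q0lem}, expand $x^n$ in the Chebyshev basis with the halved $T_0$ term, apply $K$ termwise, and evaluate at $x=0$ using the parity of $T_m(0)$ to kill the odd-index coefficients and produce formula (\ref{Q2k}). The reindexing $i=k-j$ you describe matches the paper's bookkeeping exactly.
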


\begin{proof}
Consider a sequence $\{\gamma_k\}_{k=0}^{\infty}$ with a corresponding operator $K$ given by ($\ref{Tms}$). Applying Lemma \ref{Q0lem}, we have
\[
Q_n(0) = \frac{1}{n!}\left[  K[x^n]\right]_{x=0}.
\]
From the known expansion (see \cite[p. 412]{Cody} or \cite[p. 22]{MH})
\[
x^n = 2^{1-n}\left(\sideset{}{'}\sum_{\substack{j=0 \\ j\equiv n (\operatorname{mod} 2)}}^{n}\binom{n}{\frac{n-j}{2}} T_j(x)\right),
\]
where the prime on the sum means that the contribution of the $j=0$ term (if any) is halved, we obtain 
\begin{align*}
    Q_n(0)&=\frac{1}{n!}\left[  K\left[2^{1-n}\left(\sideset{}{'}\sum_{\substack{j=0 \\ j\equiv n (\operatorname{mod} 2)}}^{n}\binom{n}{\frac{n-j}{2}} T_j(x)\right)\right]\right]_{x=0} \\
    &=\frac{2^{1-n}}{n!}\left(\sideset{}{'}\sum_{\substack{j=0 \\ j\equiv n (\operatorname{mod} 2)}}^{n}\binom{n}{\frac{n-j}{2}} \gamma_j T_j(0)\right). 
\end{align*}
Using the fact (see, e.g., \cite[p. 24]{MH}) that 
\[
T_n(0) = 
\begin{cases}
0  & n \text{ odd,} \\
(-1)^{n/2}  & n \text{ even,} \\
\end{cases}
\]
we see that $Q_n(0) = 0$ when $n$ is odd. If $n$ is even, say $n=2k$, then 
\[
Q_{2k}(0) = \frac{2^{1-2k}}{(2k)!}\left[\frac{1}{2}\binom{2k}{k} \gamma_0+\sum_{i=1}^{k}(-1)^i\binom{2k}{k-i}\gamma_{2i}  \right].
\]
The result now directly follows from Proposition \ref{Glp}.
\end{proof}

The leads to the following useful result. 
\begin{cor}\label{coralt}
With the notation of Proposition \ref{Glem}, if there exists $n\in\mathbb{N}$ such that $Q_{2n}(0)Q_{2n+2}(0)>0$, then $\{\gamma_k\}_{k=0}^{\infty}$ is not a multiplier sequence for the Chebyshev basis.
\end{cor}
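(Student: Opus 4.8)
The plan is to prove the contrapositive through Proposition~\ref{Glem}: assuming $K$ is hyperbolicity preserving, so that
\[
G_K(0,y) = \sum_{k=0}^{\infty} Q_{2k}(0)\,y^{2k}
\]
belongs to the Laguerre--P\'olya class, I will show that no two consecutive coefficients $Q_{2n}(0)$ and $Q_{2n+2}(0)$ can share the same strict sign, i.e. that $Q_{2n}(0)Q_{2n+2}(0)\le 0$ for every $n$. The stated hypothesis $Q_{2n}(0)Q_{2n+2}(0)>0$ then contradicts membership in $\mathcal{L-P}$, so by Proposition~\ref{Glem} the sequence $\{\gamma_k\}_{k=0}^{\infty}$ cannot be a Chebyshev multiplier sequence.

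The central observation is that $G_K(0,y)$ is an \emph{even} function of $y$, which makes the substitution $t=y^2$ natural. First I would record that every even function in $\mathcal{L-P}$ admits a Hadamard factorization of the form $\varphi(y)=c\,y^{2s}e^{-a y^2}\prod_j\bigl(1-y^2/r_j^2\bigr)$, with $c\neq 0$, $a\ge 0$, $r_j>0$, and $\sum_j r_j^{-2}<\infty$; this follows from the definition of $\mathcal{L-P}$ by noting that evenness forces the linear term of the exponential to vanish and the real zeros to occur in symmetric pairs $\pm r_j$, which combine as $(1+y/r_j)(1-y/r_j)=1-y^2/r_j^2$. Setting $\Phi(t)=\sum_{k\ge 0} Q_{2k}(0)\,t^k$, so that $G_K(0,y)=\Phi(y^2)$, this factorization transfers directly to $\Phi(t)=c\,t^s e^{-a t}\prod_j\bigl(1-t/r_j^2\bigr)$.

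Next I would track the signs of the Maclaurin coefficients of $\Phi$. The factor $e^{-at}$ has coefficients $(-1)^m a^m/m!$, and each finite partial product $\prod_{j\le N}\bigl(1-t/r_j^2\bigr)$ has coefficients $(-1)^m e_m$, where $e_m\ge 0$ is the $m$th elementary symmetric function of the nonnegative numbers $r_j^{-2}$. These sign patterns survive passage to the limit $N\to\infty$ (uniform on compacta), and the Cauchy product of two power series of the form $\sum_m(-1)^m d_m t^m$ with $d_m\ge 0$ is again of that form. Hence $\Phi(t)=c\,t^s\sum_{m\ge 0}(-1)^m d_m t^m$ with all $d_m\ge 0$ and $d_0=1$, giving $Q_{2k}(0)=c\,(-1)^{k-s}d_{k-s}$ for $k\ge s$ and $Q_{2k}(0)=0$ for $k<s$. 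Consequently $Q_{2k}(0)Q_{2k+2}(0)=-c^2 d_{k-s}d_{k+1-s}\le 0$ for every $k$, which is exactly the inequality sought.

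The step I expect to require the most care is the justification that the alternating sign pattern of the coefficients of $\Phi$ persists simultaneously through the infinite product and the Cauchy product; this is the technical heart, and it amounts to the classical fact that a function in $\mathcal{L-P}$ all of whose zeros lie on one side of the origin has Maclaurin coefficients of a single strict alternation pattern. I could instead invoke this fact directly, applied to $\Phi(-t)$, whose zeros are all nonpositive, but I would prefer to include the short factorization argument above to keep the proof self-contained. The degenerate cases are handled uniformly by the same formula: if $\varphi\equiv 0$ then all $Q_{2k}(0)=0$, and if $\varphi$ has no zeros so that $\Phi=c\,t^s e^{-at}$, the coefficients still obey $Q_{2k}(0)Q_{2k+2}(0)\le 0$. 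In neither case can a positive consecutive product arise, so the hypothesis of the corollary always forces a failure of $\mathcal{L-P}$ membership.
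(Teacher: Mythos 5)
Your proof is correct and follows essentially the same route as the paper: the paper's entire proof is the observation that $G_K(0,y)$ is even together with a citation of the fact that an even function in the Laguerre--P\'olya class must have coefficients alternating in sign (\cite[Theorem 3.5]{CCsurvey}), after which Proposition~\ref{Glem} finishes the argument. The only difference is that you supply a self-contained proof of that alternation fact via the Hadamard factorization of an even $\mathcal{L-P}$ function and sign-tracking through the infinite and Cauchy products, which is a sound expansion of the cited result rather than a new approach.
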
 
\begin{proof}
The function $G_{  K}(0,y)$ in the previous lemma is an even function of $y$ and an even function in the Laguerre-P\'olya class must have coefficients that alternate in sign (see, for example, \cite[Theorem 3.5]{CCsurvey}). 
\end{proof}

\subsection{Polynomially Interpolated Multiplier Sequences}
We now focus on sequences of the form $\{p(k)\}_{k=0}^{\infty},$ where $p\in\mbb{R}[x]$. This section contains three subsections regarding the quantities $Q_{2k}(0)$ in equation (\ref{Q2k}). The first deals with the contribution of any even powers. The second deals with the contribution of any odd powers. The third combines these for a general polynomial sequence and culminates in our main result. 

\subsubsection{Even powers}\label{subsubsectioneven}
In this section, we appeal to the Chebyshev differential equation to get information on the contribution of even powers to the quantities $Q_{2k}(0)$. 
\begin{prop}
Let $n\in\mathbb{N}\cup\{0\}$ be an even integer and let $K$ be the linear operator on $\mathbb{R}[x]$ defined by
\[
{K}\left[\sum_{k=0}^{n} a_kT_k(x)\right]=\sum_{k=0}^{n} k^na_kT_k(x).
\]
Then the differential operator representation for $K$ in equation (\ref{diffop}) is of finite order. That is to say, $Q_k(x)$ is identically zero for all $k$ sufficiently large.   
\end{prop}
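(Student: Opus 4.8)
The plan is to realize $K$ as an integer power of a single second-order differential operator, with the Chebyshev differential equation supplying the crucial input. Recall that each Chebyshev polynomial satisfies
\[
(1-x^2)T_k''(x) - x T_k'(x) + k^2 T_k(x) = 0.
\]
Rearranging, the second-order differential operator $M := (x^2-1)D^2 + xD$ acts on the basis by $M[T_k] = k^2 T_k$ for every $k$. In other words, $M$ is exactly the finite-order differential-operator incarnation of the multiplier sequence $\{k^2\}_{k=0}^{\infty}$ for the Chebyshev basis. This is the entire point of invoking the Chebyshev equation: it converts the multiplication-by-$k^2$ action, which a priori one might fear is only an infinite-order operator, into a manifestly second-order one.

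Since $n$ is even, I would write $n = 2m$. The operator $K$ is characterized by $K[T_k] = k^n T_k = (k^2)^m T_k$, so the $m$-fold composition $M^m$ satisfies $M^m[T_k] = (k^2)^m T_k = k^n T_k$ for all $k$. Because $K$ and $M^m$ agree on the basis $\{T_k\}_{k=0}^{\infty}$, they coincide as linear operators on $\mbb{R}[x]$. Here one must be slightly careful to deduce equality of operators from agreement on a basis rather than on arbitrary fixed polynomials, but this is immediate from linearity.

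Finally, $M^m = \big((x^2-1)D^2 + xD\big)^m$ is plainly a differential operator: expanding the composition and commuting each $D$ past the polynomial coefficients it meets produces only terms $Q_j(x)D^j$ with $j \le 2m = n$, the top-order term being $(x^2-1)^m D^{2m}$. By the uniqueness of the representation (\ref{diffop}) recorded earlier, this expansion \emph{is} the differential-operator representation of $K$, and therefore $Q_j \equiv 0$ for all $j > n$, which is precisely the claimed finite order.

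I do not anticipate a genuine obstacle in this argument, as the Chebyshev equation does all the real work. The only points deserving attention are the two already flagged—invoking uniqueness of the representation (\ref{diffop}) to identify $M^m$ with the canonical differential form of $K$, and passing from basis-agreement to operator-equality—together with the harmless edge case $n = 0$, where under the convention $0^0 = 1$ the operator $K = M^0$ is simply the identity, consistent with order zero.
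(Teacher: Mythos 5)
Your argument is correct and is essentially the paper's own proof: both use the Chebyshev differential equation to realize multiplication by $k^2$ as the second-order operator $xD+(x^2-1)D^2$ and then take its $(n/2)$th power to obtain $K$ as a finite-order differential operator. The extra remarks on uniqueness of the representation and the $n=0$ case are fine but not a different route.
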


\begin{proof}
Suppose $n=2j$ for some $j\in\mathbb{N}\cup\{0\}$. Let 
\[
p(x) = \sum_{k=0}^{n} a_kT_k(x)\in\mathbb{R}[x].
\]
Since the Chebyshev polynomials satisfy the differential equation (see, e.g., \cite[p. 258 and p. 301]{R} or \cite[p. 85]{MH})
\[
k^2 T_k(x) = x T_k'(x)+ (x^2-1)T_k''(x) \qquad \left(k\in\mathbb{N}\cup\{0\}\right),
\]
we have
\[
K[p(x)] =  \sum_{k=0}^{n} k^{2j}a_kT_k(x) =  \sum_{k=0}^{n} (k^{2})^ja_kT_k(x) = (xD+ (x^2-1)D^2)^j p(x).
\]
Thus, $K =(xD+ (x^2-1)D^2)^j,$ which is a finite order differential operator.
\end{proof}
In general, any even power of $k$ in a sequence of the form 
\[
\{\gamma_k\}_{k=0}^{\infty} = \left\{a_0+a_1 k + a_2 k^2+ \cdots + a_nk^n\right\}_{k=0}^{\infty}
\]
will only contribute a finite number of terms to the differential operator representation (\ref{diffop}) for the corresponding operator defined by (\ref{Tms}). Since our analysis will focus on the behavior of the sequence $Q_{2k}(0)$ for large indices, the even powered terms will not be relevant. 

\subsubsection{Odd powers}

We begin by examining $Q_{2k}(0)$ for $\gamma_k = k^{n}$ where $n$ a positive integer and we will eventually specialize to the case where $n$ is odd. 

\begin{lm}\label{nQA} Let $n\in\mathbb{N}$ and let $\{\gamma_k\}_{k=0}^{\infty} = \{k^n\}_{k=0}^{\infty}$. Then the quantities $Q_{2k}(0)$ defined in equation (\ref{Q2k}) are given by 
\[
Q_{2k}(0) = \frac{2^{1-2k}}{(2k)!}A(n,k),
\]
where
\begin{equation}\label{A(n,k)}
A(n,k)= \sum_{i=1}^{k}(-1)^i\binom{2k}{k-i}(2i)^{n}.
\end{equation}
\end{lm}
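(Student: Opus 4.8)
The plan is to obtain the claimed formula by a direct substitution of $\gamma_k = k^n$ into the expression for $Q_{2k}(0)$ furnished by Proposition~\ref{Glem}. Recall that equation (\ref{Q2k}) gives
\[
Q_{2k}(0) = \frac{2^{1-2k}}{(2k)!}\left[\frac{1}{2}\binom{2k}{k}\gamma_0 + \sum_{i=1}^{k}(-1)^i\binom{2k}{k-i}\gamma_{2i}\right],
\]
so the entire task reduces to evaluating the bracketed quantity for the specific sequence $\{k^n\}_{k=0}^{\infty}$ under consideration.

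First I would dispose of the term involving $\gamma_0$. Since $n\in\mathbb{N}$ is a \emph{positive} integer, we have $\gamma_0 = 0^n = 0$, so the summand $\frac{1}{2}\binom{2k}{k}\gamma_0$ vanishes identically. This is the one small point deserving attention, since the target expression $A(n,k)$ in (\ref{A(n,k)}) carries no corresponding constant contribution; were $n=0$ permitted, this term would survive and the conclusion would require adjustment.

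With the $\gamma_0$ contribution removed, I would simply substitute $\gamma_{2i} = (2i)^n$ into the remaining sum, obtaining
\[
\frac{1}{2}\binom{2k}{k}\gamma_0 + \sum_{i=1}^{k}(-1)^i\binom{2k}{k-i}\gamma_{2i} = \sum_{i=1}^{k}(-1)^i\binom{2k}{k-i}(2i)^n = A(n,k).
\]
Multiplying through by the prefactor $\frac{2^{1-2k}}{(2k)!}$ then yields the asserted identity. There is no genuine obstacle here: the result is immediate once one records that the positivity of $n$ annihilates the $\gamma_0$ term, and so the plan coincides essentially with the complete proof.
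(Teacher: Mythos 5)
Your proposal is correct and follows exactly the same route as the paper's proof: note that $n\in\mathbb{N}$ forces $\gamma_0=0$, then substitute $\gamma_{2i}=(2i)^n$ into equation (\ref{Q2k}). Your added remark about why $n=0$ would break the statement is a nice touch but does not change the argument.
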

\begin{proof}
 Note that, since $n\in\mathbb{N}$, we have that $\gamma_0 = 0.$ The result now follows directly from equation (\ref{Q2k}) and our choice of $\gamma_k$.
\end{proof}

Computer algebra systems give the following simplifications for these expressions:
\begin{align*}
    A(1,k) &= - \frac{k+1}{2k-1} \binom{2k}{k-1}\\[1mm]
    A(3,k) &= \frac{4k(k+1)}{(2k-1)(2k-3)}\binom{2k}{k-1}\\[1mm]
    A(5,k) &= \frac{-16k(k+1)(4k-1)}{(2k-1)(2k-3)(2k-5)} \binom{2k}{k-1}\\[1mm]
\end{align*}
Through a series of lemmas and an auxiliary function, we will show that, in general, $A(n,k)/\binom{2k}{k-1}$ is a rational function of $k$ which is not identically zero whenever $n$ is odd. We will make frequent use of the rising factorial, falling factorial, and hypergeometric function which are defined as follows:
\\\\The \textit{rising and falling factorials} are defined, respectively, by
\[x^{(n)}=\prod_{k=0}^{n-1}(x+k)\qquad\text{and} \qquad (x)_n=\prod_{k=0}^{n-1}(x-k) \qquad (n\in\mathbb{N}),\]
and $x^{(0)}= (x)_0 = 1.$ The \textit{hypergeometric function} is defined in terms of the rising factorial by
\[{}_2F_1(a,b;c;z)=\sum_{n=0}^{\infty}\frac{a^{(n)}b^{(n)}}{c^{(n)}}\frac{z^n}{n!}.\]
\begin{lm}\label{lemsum} For any integer $k>0$, the function 
\[
f(x) = \sum_{i=1}^k  \binom{2k}{k-i} x^i 
\]
can be represented using the hypergeometric function ${}_2F_1$ as 
\[
f(x)= x\cdot\binom{2k}{k-1}\cdot{}_2F_1(1,1-k;2+k;-x).
\]

\end{lm}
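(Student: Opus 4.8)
The plan is to expand the right-hand side directly from the series definition of ${}_2F_1$ and match it against $f$ coefficient by coefficient. Writing
\[
{}_2F_1(1,1-k;2+k;-x) = \sum_{n=0}^{\infty} \frac{1^{(n)}(1-k)^{(n)}}{(2+k)^{(n)}}\frac{(-x)^n}{n!},
\]
I would first record two simplifications. Since $1^{(n)} = n!$, the factor $1^{(n)}/n!$ is identically $1$. And since the rising factorial $(1-k)^{(n)} = (1-k)(2-k)\cdots(n-k)$ acquires the factor $0$ as soon as $n \geq k$, the series truncates at $n = k-1$, so the right-hand side is a genuine polynomial of degree $k$ in $x$ (after the factor $x$ out front), consistent with the left-hand side.

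Next I would reindex with $i = n+1$, obtaining
\[
x\binom{2k}{k-1}\,{}_2F_1(1,1-k;2+k;-x) = \binom{2k}{k-1}\sum_{i=1}^{k} (-1)^{i-1}\frac{(1-k)^{(i-1)}}{(2+k)^{(i-1)}}\, x^i.
\]
Comparing with $f(x) = \sum_{i=1}^{k}\binom{2k}{k-i}x^i$, the lemma reduces to verifying the single identity
\[
\binom{2k}{k-i} = \binom{2k}{k-1}(-1)^{i-1}\frac{(1-k)^{(i-1)}}{(2+k)^{(i-1)}} \qquad (1 \le i \le k).
\]

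To establish this, I would convert both rising factorials into ratios of ordinary factorials. Each of the $i-1$ factors of $(1-k)^{(i-1)}$ is negative (since $1,\dots,i-1 < k$), so $(-1)^{i-1}(1-k)^{(i-1)} = (k-1)!/(k-i)!$, while $(2+k)^{(i-1)} = (k+i)!/(k+1)!$. Substituting these together with $\binom{2k}{k-1} = (2k)!/\big((k-1)!(k+1)!\big)$, the factorials cancel and the right-hand side collapses to $(2k)!/\big((k-i)!(k+i)!\big) = \binom{2k}{k-i}$, as desired. Nothing here is deep; the only mild obstacle is the careful tracking of signs and of the index shift when rewriting the rising factorials as factorials, after which the identity follows by direct cancellation.
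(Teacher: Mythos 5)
Your proposal is correct and follows essentially the same route as the paper's proof: expand the ${}_2F_1$ series, note the truncation from $(1-k)^{(n)}$, reindex, and verify that the coefficient of $x^i$ collapses to $\binom{2k}{k-i}$ via factorial cancellation. The only cosmetic difference is that you package the rising factorials as the ratios $(k-1)!/(k-i)!$ and $(k+i)!/(k+1)!$ up front, whereas the paper performs the same cancellations stepwise.
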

\begin{proof} 
Using the definition of the hypergeometric function, re-indexing the sum, and using the fact that $(1-k)^{(i-1)}=0$ for $i>k$, we get 
\begin{align*}
    x\cdot\binom{2k}{k-1}\cdot{}_2F_1(1,1-k;2+k;-x)&=\frac{x(2k)!}{(k-1)!(k+1)!}\sum_{i=0}^{\infty}\frac{1^{(i)}(1-k)^{(i)}}{(2+k)^{(i)}}\frac{(-x)^i}{i!}\\
    &=\frac{(2k)!}{(k-1)!(k+1)!}\sum_{i=1}^{k}\frac{(1-k)^{(i-1)}(-1)^{i-1}}{(2+k)^{(i-1)}}x^{i}.\\
\end{align*}   
Simplifying the coefficient of $x^i$, 
 \begin{align*}   
    \frac{(1-k)^{(i-1)}(-1)^{i-1}}{(2+k)^{(i-1)}}&=\frac{(1-k)(2-k)(3-k)\dots(i-1-k)(-1)^{i-1}}{(2+k)(3+k)(4+k)\dots(i+k)}\\
    &=\frac{(k-1)(k-2)(k-3)\dots(k-i+1)}{(2+k)(3+k)(4+k)\dots(i+k)}.\\
\end{align*}   
Dividing by $(k-1)!$ we have
 \begin{align*} 
    \frac{1}{(k-1)!}\frac{(1-k)^{(i-1)}(-1)^{i-1}}{(2+k)^{(i-1)}}&=\frac{1}{(2+k)(3+k)(4+k)\dots(i+k)\cdot(k-i)!}=\frac{(k+1)!}{(k+i)!(k-i)!}.\\
\end{align*}   
Dividing by $(k+1)!$ then gives 
 \begin{align*} 
    \frac{1}{(k-1)!(k+1)!}\frac{(1-k)^{(i-1)}(-1)^{i-1}}{(2+k)^{(i-1)}}&=\frac{1}{(k+i)!(k-i)!}.\\
\end{align*}   
Therefore, 
 \begin{align*} 
    x\cdot\binom{2k}{k-1}\cdot{}_2F_1(1,1-k;2+k;-x)&= (2k)!\cdot\sum_{i=1}^{k}\frac{1}{(k-1)!(k+1)!}\frac{(1-k)^{(i-1)}\color{black}(-1)^{i-1}}{(2+k)^{(i-1)}\color{black}}x^i\\
    &=\sum_{i=1}^{k}\frac{(2k)!}{(k+i)!(k-i)!}x^i=\sum_{i=1}^{k}\binom{2k}{k-i}x^i.
\end{align*}
\end{proof}
The function in the previous lemma is related to $A(n,k)$ through differentiation. We record this in the following lemma.
\begin{lm}
Let $k\in\mathbb{N}$ and let 
\[
f(x) = \sum_{i=1}^k  \binom{2k}{k-i} x^i . 
\]
The quantity $A(n,k)$ from equation (\ref{A(n,k)}) is given by
\[
A(n,k) = 2^n \left[ \theta^n f(x)\right]_{x=-1} \qquad \left(\theta=xD = x\frac{d}{dx}\right).
\]
\end{lm}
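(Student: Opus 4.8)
The plan is to exploit the fact that the monomials $x^i$ are eigenfunctions of the Euler operator $\theta = xD$. A direct computation gives
\[
\theta[x^i] = x \frac{d}{dx} x^i = x \cdot i\, x^{i-1} = i\, x^i,
\]
so that $x^i$ is an eigenfunction of $\theta$ with eigenvalue $i$. I would then establish by a short induction on $n$ that $\theta^n[x^i] = i^n x^i$: assuming the identity for $n$, one applies $\theta$ once more and uses linearity to obtain $\theta^{n+1}[x^i] = \theta[i^n x^i] = i^n \cdot i\, x^i = i^{n+1} x^i$.

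With this eigenvalue identity in hand, the remainder is a routine linear computation. Applying $\theta^n$ term-by-term to $f(x) = \sum_{i=1}^k \binom{2k}{k-i} x^i$ gives
\[
\theta^n f(x) = \sum_{i=1}^k \binom{2k}{k-i} i^n x^i.
\]
Evaluating at $x = -1$ produces
\[
\left[\theta^n f(x)\right]_{x=-1} = \sum_{i=1}^k (-1)^i \binom{2k}{k-i} i^n,
\]
and multiplying by $2^n$ and folding the factor into the power yields
\[
2^n \left[\theta^n f(x)\right]_{x=-1} = \sum_{i=1}^k (-1)^i \binom{2k}{k-i} (2i)^n = A(n,k),
\]
which is exactly equation (\ref{A(n,k)}).

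I do not anticipate any genuine obstacle here; the lemma is in essence a bookkeeping identity recording how $A(n,k)$ arises from the polynomial $f$ of the previous lemma. The only point warranting a moment's care is that the index ranges of $f$ and of $A(n,k)$ both begin at $i = 1$, so that no constant term intervenes; in any case $\theta^n$ annihilates a constant term for $n \ge 1$, since the corresponding eigenvalue is $0$. This alignment is precisely what makes the factor-of-$2^n$ normalization reproduce the $(2i)^n$ appearing in the definition of $A(n,k)$.
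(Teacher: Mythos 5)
Your proposal is correct and follows essentially the same route as the paper's proof: both rest on the eigenvalue identity $\theta^n x^i = i^n x^i$, linearity of $\theta$, evaluation at $x=-1$, and absorbing the factor $2^n$ into $(2i)^n$. The only difference is that you spell out the short induction establishing the eigenvalue identity, which the paper simply cites as a known relation.
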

\begin{proof}
From the relation $ \theta^n x^i = i^n x^i$, and the linearity of the operator $  \theta$, we obtain
\begin{align*}
    2^n\left[ \theta^n f(x)\right]_{x=-1}&= 2^n\left[ \theta^n \sum_{i=1}^k  \binom{2k}{k-i} x^i \right]_{x=-1}= 2^n\left[\sum_{i=1}^k  \binom{2k}{k-i} i^n x^i \right]_{x=-1} \\
    &= 2^n\sum_{i=1}^k  \binom{2k}{k-i} i^n (-1)^i = A(n,k).\\
\end{align*}
\end{proof}

Combining the previous two lemmas, we obtain:

\begin{cor} \label{Afun} For positive integers $n$ and $k$, 
\[A(n,k) = 2^n\binom{2k}{k-1}\left[ \theta^n\left(x\cdot{}_2F_1(1,1-k;2+k;-x)\right)\right]_{x=-1}\qquad \left( \theta = x\frac{d}{dx}\right).\]
\end{cor}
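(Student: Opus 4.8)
The plan is to obtain this corollary as a direct composition of the two immediately preceding lemmas, with essentially no new computation required. First I would invoke the second of these lemmas (the one stated just before the corollary), which expresses the quantity $A(n,k)$ from equation (\ref{A(n,k)}) as
\[
A(n,k) = 2^n\left[\theta^n f(x)\right]_{x=-1}, \qquad \theta = x\frac{d}{dx},
\]
where $f(x) = \sum_{i=1}^k \binom{2k}{k-i} x^i$. This already isolates the dependence on $n$ inside the operator $\theta^n$ and localizes the evaluation at the single point $x=-1$, so all that remains is to replace $f$ by its closed form.

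Next I would substitute the hypergeometric expression for $f$ supplied by Lemma \ref{lemsum}, namely $f(x) = x\cdot\binom{2k}{k-1}\cdot{}_2F_1(1,1-k;2+k;-x)$, directly into the bracket. The only point requiring care is that the factor $\binom{2k}{k-1}$ is a constant with respect to $x$, so that it commutes with the operator $\theta^n$: since $\theta = xD$ is linear, each power $\theta^n$ is linear as well, and any scalar may be pulled outside. Factoring this constant out of $\theta^n$, and hence out of the evaluation at $x=-1$, yields exactly
\[
A(n,k) = 2^n\binom{2k}{k-1}\left[\theta^n\left(x\cdot{}_2F_1(1,1-k;2+k;-x)\right)\right]_{x=-1},
\]
which is the claimed identity.

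There is essentially no obstacle here, as the statement is a bookkeeping step that packages the two lemmas into a single formula; the only subtlety beyond the linearity observation above is that the evaluation point and the scaling factor $2^n$ carry through unchanged. The genuine content is recognizing that this is the \emph{useful} reformulation: it trades the finite binomial sum defining $A(n,k)$ for the action of a fixed differential operator on an explicit hypergeometric function, which is precisely the form that will later allow one to argue that $A(n,k)/\binom{2k}{k-1}$ is a rational function of $k$ that does not vanish identically when $n$ is odd.
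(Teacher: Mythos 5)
Your proposal is correct and matches the paper exactly: the paper derives this corollary simply by combining the two preceding lemmas, substituting the hypergeometric closed form of $f$ into $A(n,k) = 2^n\left[\theta^n f(x)\right]_{x=-1}$ and pulling the constant $\binom{2k}{k-1}$ through the linear operator $\theta^n$. Nothing is missing.
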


Now, in order to simplify the derivatives in the previous corollary, we define a new function of the variable $x$ with parameters $n$ and $k$ by
\begin{equation}\label{g}
g(n,k;x) = x^{n+1}\cdot{}_2F_1(1+n,1+n-k;2+n+k;-x)
\end{equation}
and note that the conclusion of Corollary \ref{Afun} can be stated as 
\begin{equation}\label{Ag}
A(n,k) = 2^n\binom{2k}{k-1}\left[ \theta^n g(0,k;x)\right]_{x=-1}.
\end{equation}
When calculating a related expression for the first few values of $n$, we obtained:
\[
 \theta g(0,k;x) = g(0,k;x)+ \frac{k-1}{k+2} g(1,k;x)
\]
\[
 \theta g(1,k;x) = 2g(1,k;x)+ 2\frac{k-2}{k+3} g(2,k;x) 
\]
\[
 \theta g(2,k;x) = 3g(2,k;x)+ 3\frac{k-3}{k+4} g(3,k;x) 
\]
and this pattern generalizes as shown in the following lemma. As the arguments of $k$ and $x$ do not change, we will write $g(n) = g(n,k;x)$ for clarity. 
\begin{lm}\label{lmgd}
Let 
\[
g(n) = x^{n+1}\cdot{}_2F_1(1+n,1+n-k;2+n+k;-x).
\]
Then 
\[
{ \theta}g(n) = (n+1)\left(g(n) + \frac{k-n-1}{k+n+2}g(n+1)\right) \qquad \left(\theta = x\frac{d}{dx}\right).
\]
\end{lm}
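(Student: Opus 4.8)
The plan is to work directly from the power-series definition of ${}_2F_1$, since every object in sight is an explicit power series in $x$ and the operator $\theta = xD$ acts diagonally on monomials. Expanding the series and comparing coefficients of like powers of $x$ reduces the entire lemma to a single algebraic identity among rising factorials, with no need for contiguous relations or any special-function machinery.

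First I would expand $g(n)$ as
\[
g(n) = \sum_{m=0}^{\infty} c_m\, x^{n+1+m}, \qquad c_m = \frac{(1+n)^{(m)}(1+n-k)^{(m)}}{(2+n+k)^{(m)}}\,\frac{(-1)^m}{m!},
\]
reading off $c_m$ from the definition of the hypergeometric function together with the prefactor $x^{n+1}$ and the substitution $z = -x$. Because $\theta x^{n+1+m} = (n+1+m)x^{n+1+m}$, applying $\theta$ term by term gives $\theta g(n) = \sum_{m} (n+1+m)c_m x^{n+1+m}$, and therefore
\[
\theta g(n) - (n+1) g(n) = \sum_{m=1}^{\infty} m\, c_m\, x^{n+1+m}.
\]
The $m=0$ term vanishes, and reindexing by $m = j+1$ turns the right-hand side into $\sum_{j=0}^{\infty} (j+1)c_{j+1}\, x^{n+2+j}$. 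In parallel I would write $g(n+1) = \sum_{j=0}^{\infty} d_j x^{n+2+j}$ with $d_j = \frac{(2+n)^{(j)}(2+n-k)^{(j)}}{(3+n+k)^{(j)}}\frac{(-1)^j}{j!}$, so that both series are indexed by the same powers of $x$.

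It then remains to verify the coefficientwise identity $(j+1)c_{j+1} = (n+1)\frac{k-n-1}{k+n+2}\, d_j$ for every $j \ge 0$. This is the one place where genuine computation occurs: using the rising-factorial splitting $a^{(j+1)} = a\,(a+1)^{(j)}$ on each of $(1+n)^{(j+1)}$, $(1+n-k)^{(j+1)}$, and $(2+n+k)^{(j+1)}$, the coefficient $(j+1)c_{j+1}$ factors as $\frac{(1+n)(1+n-k)}{2+n+k}$ times precisely the data defining $d_j$, up to the sign discrepancy between $(-1)^{j+1}$ and $(-1)^j$. Rewriting $-(1+n-k) = k-n-1$ produces the claimed constant $(n+1)\frac{k-n-1}{k+n+2}$, and summing the matched coefficients back over $j$ recovers $\theta g(n) - (n+1)g(n) = (n+1)\frac{k-n-1}{k+n+2}g(n+1)$, which is the assertion.

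I expect the only real obstacle to be clerical: keeping the index shift $m = j+1$ and the alternating signs aligned so that the two series match term for term. There is no analytic subtlety, because the identity is established at the level of formal power series in $x$ — the operator $\theta$ acts diagonally on each monomial — so no questions of convergence or interchange of summation arise. (In the intended application $k$ is a positive integer and one only needs $g(n)$ for $n \le k-1$, in which case $1+n-k$ is a non-positive integer, the factor $(1+n-k)^{(m)}$ forces the series to terminate, and each $g(n)$ is in fact a polynomial.)
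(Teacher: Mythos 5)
Your proposal is correct and follows essentially the same route as the paper: both isolate the $(n+1)g(n)$ term coming from the prefactor $x^{n+1}$ and identify the remainder as a constant multiple of $g(n+1)$ via the parameter-shift property of ${}_2F_1$. The only difference is that the paper invokes the product rule together with the cited derivative formula $\frac{d}{dx}F(a,b;c;x)=\frac{ab}{c}F(a+1,b+1;c+1;x)$, whereas you re-derive that same fact coefficientwise from the rising-factorial splitting $a^{(j+1)}=a\,(a+1)^{(j)}$, which makes your version marginally more self-contained but not substantively different.
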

\begin{proof}
We calculate
\begin{align*}
    { \theta}g(n)
    &=x \frac{d}{dx}\left[x^{n+1}\cdot{}_2F_1(1+n,1+n-k;2+n+k;-x)\right] \\
    &= (n+1)x^{n+1}{}_2F_1(1+n,1+n-k;2+n+k;-x) \\
    &{} \qquad \qquad  + x^{n+2}\frac{d}{dx}\left({}_2F_1(1+n,1+n-k;2+n+k;-x)\right)\\
    &= (n+1)g(n) - x^{n+2} \frac{(1+n)(1+n-k)}{2+n+k}{}_2F_1(2+n, 2+n-k; 3+n+k; -x)\\
    &=(n+1)\left(g(n) + \frac{k-n-1}{2+n+k}g(n+1)\right),\\
\end{align*}
where we have used the identity (see, e.g., \cite[p. 69]{R})
$$
\frac{d}{dx} F(a, b;c;x) = \frac{ab}{c} F(a+1,b+1;c+1;x).
$$
\end{proof}

Using Lemma \ref{lmgd} to recursively calculate 
$ \theta^ng(n,k,1)$, we obtained constants for each term which appear in the Online Encyclopedia of Integer Sequences \cite[A028246]{OEIS} as Worpitzky's triangular array involving alternating binomial power sums. From this we obtained the following lemma. Again, we will suppress the $k$ and $x$ from the function $g$ for clarity.
\begin{lm}\label{dg0}
For positive integers $n$ and $k$, we have
\[ \theta^ng(0)=\sum_{i=0}^{n}\frac{(k-1)_i}{(2+k)^{(i)}}g(i)\cdot C_{i,n}\qquad \left(\theta = x\frac{d}{dx}\right),\]
where $g(n) = g(n,k;x)$ is defined in equation \emph{(\ref{g})} and $C_{i,n}$ is Worpitzky's triangular array defined by
\begin{equation}\label{C_{i,n}}
C_{i,n}=\frac{1}{i+1}\sum_{j=0}^{i+1}(-1)^{i-j+1}\binom{i+1}{j}j^{n+1}.
\end{equation}
\end{lm}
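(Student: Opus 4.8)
The plan is to prove the identity by induction on $n$, using the single-step recurrence of Lemma \ref{lmgd} as the engine and the combinatorial recurrence of Worpitzky's array as the bookkeeping. For the base case $n=0$ one checks directly that $\theta^0 g(0) = g(0)$, while the right-hand side collapses to the single term $i=0$, where $(k-1)_0 = (2+k)^{(0)} = 1$ and $C_{0,0} = \frac{1}{1}\sum_{j=0}^{1}(-1)^{1-j}\binom{1}{j}j = 1$; so both sides agree.

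For the inductive step, I would assume the formula holds for $n$ and apply $\theta$ term by term. By Lemma \ref{lmgd}, $\theta g(i) = (i+1)\left(g(i) + \frac{k-i-1}{k+i+2}g(i+1)\right)$, so
\[
\theta^{n+1}g(0) = \sum_{i=0}^{n}\frac{(k-1)_i}{(2+k)^{(i)}}(i+1)C_{i,n}\,g(i) + \sum_{i=0}^{n}\frac{(k-1)_i}{(2+k)^{(i)}}(i+1)C_{i,n}\frac{k-i-1}{k+i+2}\,g(i+1).
\]
In the second sum I would shift the index $i \mapsto i-1$, turning it into a sum over $i=1,\dots,n+1$ of $g(i)$. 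The key simplification is that the factorial prefactors telescope into the same shape as the first sum: using $(k-1)_{i-1}(k-i) = (k-1)_i$ and $(2+k)^{(i-1)}(k+i+1) = (2+k)^{(i)}$, the reindexed coefficient becomes exactly $\frac{(k-1)_i}{(2+k)^{(i)}}\, i\, C_{i-1,n}$. Merging the two sums (with the conventions $C_{n+1,n}=0$ and the $i=0$ term of the second sum vanishing) gives
\[
\theta^{n+1}g(0) = \sum_{i=0}^{n+1}\frac{(k-1)_i}{(2+k)^{(i)}}\,g(i)\left[(i+1)C_{i,n} + i\,C_{i-1,n}\right].
\]

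It therefore remains to verify the purely combinatorial recurrence $C_{i,n+1} = (i+1)C_{i,n} + i\,C_{i-1,n}$ for the array defined in (\ref{C_{i,n}}), and I expect this to be the main point requiring care. The cleanest route is to recognize the defining sum as a Stirling number of the second kind, namely $C_{i,n} = i!\,S(n+1,i+1)$, which also makes transparent that $C_{i,n}=0$ for $i>n$, consistent with the boundary conventions above. Then the standard recurrence $S(N+1,M) = M\,S(N,M) + S(N,M-1)$ with $N=n+1$ and $M=i+1$ yields $C_{i,n+1} = (i+1)C_{i,n} + i\,C_{i-1,n}$ after clearing factorials. Alternatively, one can prove the recurrence directly from the binomial-sum definition via Pascal's rule $\binom{i+2}{j} = \binom{i+1}{j} + \binom{i+1}{j-1}$, though keeping track of the index shifts and the $\frac{1}{i+1}$ normalization is the delicate bookkeeping. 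Substituting this recurrence into the bracket completes the induction.
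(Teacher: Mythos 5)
Your proof is correct and follows essentially the same induction as the paper's: verify the base case, apply $\theta$ termwise via Lemma \ref{lmgd}, reindex the second sum and absorb the factors $(k-i)$ and $(k+i+1)$ into the falling and rising factorials, then merge using the recurrence $C_{i,n+1}=(i+1)C_{i,n}+i\,C_{i-1,n}$. The only difference is that the paper simply cites Worpitzky for that recurrence (together with the boundary identity $C_{n+1,n+1}=(n+1)C_{n,n}$), whereas you derive it from the identification $C_{i,n}=i!\,S(n+1,i+1)$ with Stirling numbers of the second kind, which is a valid, self-contained substitute.
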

\begin{proof}
(By induction.) The result is readily verified for $n=0$. Furthermore, the case $n=1$ can be obtained directly from Lemma \ref{lmgd}. Now suppose the lemma holds for some integer $n\geq 0$. Then 
\begin{align*}
     \theta^{n+1}g(0)
    &= \theta \left(\sum_{i=0}^{n}\frac{(k-1)_i}{(2+k)^{(i)}}g(i)\cdot C_{i,n}\right)\\
    &=\left(\sum_{i=0}^{n}\frac{(k-1)_i}{(2+k)^{(i)}}  \theta g(i)\cdot C_{i,n}\right)\\
    &=\sum_{i=0}^{n}\frac{(k-1)_i}{(2+k)^{(i)}}(i+1)g(i)\cdot C_{i,n} +\sum_{i=0}^{n}\frac{(k-1)_i}{(2+k)^{(i)}} (i+1)\frac{k-i-1}{k+i+2}g(i+1)\cdot C_{i,n}\\
    &=\sum_{i=0}^{n}\frac{(k-1)_i}{(2+k)^{(i)}}(i+1) g(i)\cdot C_{i,n} +\sum_{i=1}^{n+1}\frac{(k-1)_{i}}{(2+k)^{(i)}} g(i)\cdot iC_{i-1,n}\\
    &=\sum_{i=0}^{n+1}\frac{(k-1)_i}{(2+k)^{(i)}} g(i)\cdot C_{i,n+1}\\
\end{align*}
where, for $0\leq i<n+1$ we have used the identities (see, e.g., \cite[p. 210]{W})
\[
C_{i, n+1} = (i+1)C_{i,n}+ iC_{i-1, n}
\]
and 
\[
C_{n+1,n+1} = (n+1)C_{n,n}.
\]
\end{proof}

We will need to evaluate these expressions at $x=-1.$ The next lemma aids us in this endeavour.
\begin{lm} \label{g-1}
Let $i,k\in\mathbb{N}\cup\{0\}.$ If $k>i/2$, then 
\[
g(i,k;-1) = (-1)^{i+1}\frac{(k+1)^{(i+1)}}{(2k)_{i+1}}.
\]
\end{lm}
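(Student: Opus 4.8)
The plan is to reduce the claim to a single application of Gauss's hypergeometric summation theorem. First I would substitute $x=-1$ directly into the definition (\ref{g}) of $g$, so that $-x=1$ and
\[
g(i,k;-1) = (-1)^{i+1}\,{}_2F_1(1+i,\,1+i-k;\,2+i+k;\,1).
\]
Thus everything rests on evaluating the hypergeometric function at the unit argument $z=1$.

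Next I would recall Gauss's theorem,
\[
{}_2F_1(a,b;c;1) = \frac{\Gamma(c)\,\Gamma(c-a-b)}{\Gamma(c-a)\,\Gamma(c-b)},
\]
valid whenever $c\notin\{0,-1,-2,\dots\}$ and $\operatorname{Re}(c-a-b)>0$. With $a=1+i$, $b=1+i-k$, and $c=2+i+k$, a short computation gives $c-a-b=2k-i$, $c-a=1+k$, and $c-b=1+2k$. The convergence condition $\operatorname{Re}(c-a-b)>0$ becomes $2k-i>0$, i.e.\ exactly the hypothesis $k>i/2$; this is the crucial point where the stated restriction on $k$ enters. When $k>i$ the parameter $b=1+i-k$ is a non-positive integer and the series terminates, but since $c=2+i+k\geq 2$ is never a non-positive integer, the formula remains valid in that case as well.

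Applying the theorem then yields
\[
{}_2F_1(1+i,1+i-k;2+i+k;1) = \frac{\Gamma(2+i+k)\,\Gamma(2k-i)}{\Gamma(1+k)\,\Gamma(1+2k)}.
\]
It remains to translate this ratio of Gamma functions into the rising/falling factorial notation of the statement. Using the identities $(k+1)^{(i+1)}=\Gamma(k+i+2)/\Gamma(k+1)$ and $(2k)_{i+1}=\Gamma(2k+1)/\Gamma(2k-i)$, the right-hand side above is precisely $(k+1)^{(i+1)}/(2k)_{i+1}$; multiplying by the prefactor $(-1)^{i+1}$ then gives the claimed formula.

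The computation is essentially routine once Gauss's theorem is invoked, and I do not expect a serious obstacle. The only real care needed is bookkeeping with the Gamma-function arguments, together with two observations: that the hypothesis $k>i/2$ is exactly the convergence condition $2k-i>0$, and that the terminating-series edge case $k>i$ is covered because $c=2+i+k$ never becomes a non-positive integer. The most error-prone step is simply matching the shifted quotients $\Gamma(k+i+2)/\Gamma(k+1)$ and $\Gamma(2k+1)/\Gamma(2k-i)$ to the rising and falling factorials $(k+1)^{(i+1)}$ and $(2k)_{i+1}$, respectively.
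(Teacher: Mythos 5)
Your proposal is correct and follows essentially the same route as the paper: evaluate $g(i,k;-1)$ via the definition, apply Gauss's summation formula ${}_2F_1(a,b;c;1)=\Gamma(c)\Gamma(c-a-b)/(\Gamma(c-a)\Gamma(c-b))$ under the condition $2k-i>0$, and rewrite the resulting Gamma quotients as $(k+1)^{(i+1)}/(2k)_{i+1}$. Your extra remark about the terminating case $k>i$ is a harmless bit of additional care not present in the paper's proof.
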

\begin{proof}
By definition (see equation (\ref{g}))
\[
g(i,k;-1) = (-1)^{i+1}{}_2F_1(1+i, 1+i-k; 2+i+k; 1).
\]  
Using the formula (\cite[p. 49, l. 7]{R})
$$
F(a,b;c;1) = \frac{\Gamma(c) \Gamma(c-a-b)}{\Gamma(c-a) \Gamma(c-b)} \qquad (\text{Re}(c-a-b)>0)
$$    
we obtain for $k>i/2$
\begin{align*} 
    g(i,k;-1)&=  (-1)^{i+1}\frac{\Gamma(2+i+k)\Gamma(2k-i)}{\Gamma(1+k)\Gamma(2k+1)}\\
    &=  (-1)^{i+1}\frac{(1+i+k)!(2k-i-1)!}{k!(2k)!}\\
    &= (-1)^{i+1}\frac{(k+1)^{(i+1)}}{(2k)_{i+1}}.\\
\end{align*}
\end{proof}
We may now combine the preceding results to obtain the following proposition.
\begin{prop} \label{propQ}For positive integers $k$ and $n$ satisfying $k>n/2$, the expressions $A(n,k)$ defined in equation \emph{(\ref{A(n,k)})} are given by
\[A(n,k) = 2^n\binom{2k}{k-1}(k+1)\frac{N(n,k)}{(2k)_{n+1}},\]
where
\[
N(n,k) = \sum_{i=0}^{n}(-1)^{i+1} C_{i,n}(k-1)_{i}(2k-i-1)_{n-i},
\]
and
$C_{i,n}$ is defined as in equation \emph{(\ref{C_{i,n}})}. In particular, we note that $A(n,k)/\binom{2k}{k-1}$ is a rational function of the variable $k$.
\end{prop}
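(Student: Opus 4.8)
The plan is to assemble the ingredients already established, namely equation (\ref{Ag}), Lemma \ref{dg0}, and Lemma \ref{g-1}, and then reduce the resulting Pochhammer expressions to the claimed closed form. Starting from
\[
A(n,k) = 2^n\binom{2k}{k-1}\left[\theta^n g(0)\right]_{x=-1},
\]
I would first substitute the expansion from Lemma \ref{dg0},
\[
\theta^n g(0) = \sum_{i=0}^{n}\frac{(k-1)_i}{(2+k)^{(i)}} g(i)\, C_{i,n},
\]
so that evaluating at $x=-1$ reduces everything to the numbers $g(i,k;-1)$. Since we assume $k>n/2$, we have $k>i/2$ for every index $i$ in the range $0\le i\le n$, so Lemma \ref{g-1} applies uniformly and each $g(i,k;-1)$ may be replaced by $(-1)^{i+1}(k+1)^{(i+1)}/(2k)_{i+1}$.

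After this substitution, the summand carries the factor $(k-1)_i (k+1)^{(i+1)}/\big((2+k)^{(i)}(2k)_{i+1}\big)$, and the heart of the argument is two factorial simplifications. First, writing out the rising factorials termwise shows that $(k+1)^{(i+1)}/(2+k)^{(i)} = k+1$ independently of $i$; this telescoping pulls the constant $k+1$ outside the sum. Second, to put every term over the common denominator $(2k)_{n+1}$, I would use the falling-factorial identity $(2k)_{n+1}/(2k)_{i+1} = (2k-i-1)_{n-i}$, which is exactly the length-$(n-i)$ block appearing in $N(n,k)$. Carrying out these two reductions term by term turns the sum into $(k+1)N(n,k)/(2k)_{n+1}$, with $N(n,k)$ as defined, completing the identity.

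The bookkeeping in these two Pochhammer identities is the only real obstacle; both are elementary once the products are written out explicitly, but the indexing must be tracked with care, and it is precisely the hypothesis $k>n/2$ that guarantees Lemma \ref{g-1} is valid across the entire summation range. Finally, the ``in particular'' assertion is immediate from the displayed formula: dividing through by $\binom{2k}{k-1}$ leaves $2^n(k+1)N(n,k)/(2k)_{n+1}$, a quotient of polynomials in $k$ (the $C_{i,n}$ being constants), hence a rational function of $k$.
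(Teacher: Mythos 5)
Your proposal is correct and follows essentially the same route as the paper's proof: combine equation (\ref{Ag}) with Lemma \ref{dg0} and Lemma \ref{g-1}, cancel $(k+1)^{(i+1)}/(2+k)^{(i)}$ to $k+1$, and clear denominators to $(2k)_{n+1}$. You actually spell out the Pochhammer identity $(2k)_{n+1}/(2k)_{i+1}=(2k-i-1)_{n-i}$ that the paper leaves implicit, which is a small but welcome addition.
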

\begin{proof}
Combining equation (\ref{Ag}), Lemma \ref{dg0}, and Lemma \ref{g-1}, we obtain
\begin{align*}
A(n,k) &= 2^n\binom{2k}{k-1}\left[ \theta^n g(0,k;x)\right]_{x=-1}\\
&=2^n\binom{2k}{k-1}\sum_{i=0}^{n}\frac{(k-1)_i}{(2+k)^{(i)}}g(i,k;-1)\cdot C_{i,n}\\
&=2^n\binom{2k}{k-1}\sum_{i=0}^{n}\frac{(k-1)_i}{(2+k)^{(i)}}(-1)^{i+1}\frac{(k+1)^{(i+1)}}{(2k)_{i+1}}\cdot C_{i,n}\\
&=2^n\binom{2k}{k-1}\sum_{i=0}^{n}(-1)^{i+1}\frac{(k-1)_i(k+1)}{(2k)_{i+1}}\cdot C_{i,n}.
\end{align*}
Finally, the result follows from rewriting the sum as a single fraction using the common denominator $(2k)_{n+1}$.
\end{proof}

We summarize this section in the following corollary.

\begin{cor}\label{corQ}
Let $n$ be a positive integer and, for $k\in\mathbb{N}\cup\{0\}$, define $\gamma_k=k^n$. Then, for $k>n/2,$ the quantities $Q_{2k}(0)$ defined in equation (\ref{Q2k}) are given by 
\[
Q_{2k}(0) = \frac{2^{1-2k+n}}{(2k)!}\binom{2k}{k-1}(k+1)\frac{N(n,k)}{(2k)_{n+1}},\]
where
\[
N(n,k) = \sum_{i=0}^{n}(-1)^{i+1} C_{i,n}(k-1)_{i}(2k-i-1)_{n-i},
\]
and
$C_{i,n}$ is defined as in equation \emph{(\ref{C_{i,n}})}.
\end{cor}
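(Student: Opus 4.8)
The plan is to derive this corollary as an immediate consequence of the two results already in hand, namely Lemma \ref{nQA} and Proposition \ref{propQ}; no new analysis should be required. The corollary is essentially a packaging step that specializes and concatenates identities we have already proved.

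First I would recall that Lemma \ref{nQA} expresses the target quantity directly in terms of $A(n,k)$, giving
\[
Q_{2k}(0) = \frac{2^{1-2k}}{(2k)!}\,A(n,k)
\]
for the sequence $\gamma_k = k^n$. Here the hypothesis $n \in \mathbb{N}$ is what guarantees $\gamma_0 = 0$, which is precisely the condition under which that lemma applies; note that this identity holds for every $k$ and so contributes no restriction of its own.

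Next I would invoke Proposition \ref{propQ}, which supplies the closed form
\[
A(n,k) = 2^n\binom{2k}{k-1}(k+1)\frac{N(n,k)}{(2k)_{n+1}},
\]
valid precisely when $k > n/2$, with $N(n,k)$ as stated. Substituting this expression into the formula from Lemma \ref{nQA} and combining the powers of two via $2^{1-2k}\cdot 2^n = 2^{1-2k+n}$ yields
\[
Q_{2k}(0) = \frac{2^{1-2k+n}}{(2k)!}\binom{2k}{k-1}(k+1)\frac{N(n,k)}{(2k)_{n+1}},
\]
which is exactly the claimed formula. The hypothesis $k > n/2$ is simply inherited from Proposition \ref{propQ}, the only step that imposes such a constraint.

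I do not anticipate any genuine obstacle here, since the substantive work has already been completed upstream: the hypergeometric representation in Lemma \ref{lemsum}, the recursion for $\theta^n g(0)$ in Lemma \ref{dg0}, the boundary evaluation at $x=-1$ in Lemma \ref{g-1}, and their assembly in Proposition \ref{propQ}. What remains is only the concatenation of two established identities together with the elementary combination of exponents of $2$, so the ``proof'' is a one-line substitution rather than a new argument.
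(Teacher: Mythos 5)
Your proof is correct and matches the paper's own argument, which is literally ``Combine Lemma \ref{nQA} with Proposition \ref{propQ}''; you have simply written out the substitution and the combination of the powers of $2$ explicitly. Nothing further is needed.
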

\begin{proof}
Combine Lemma \ref{nQA} with Proposition \ref{propQ}.
\end{proof}

We note that, in light of section \ref{subsubsectioneven}, when $n$ is even, the quantities $Q_{2k}(0)$ must be zero for all sufficiently large $k$. Thus, the rational function that appears must reduce to the identically zero function, i.e., $N(n,k)$ must be identically zero. When $n$ is odd, this is not the case. We record this in the following lemma that will be used in the main result of the next section.
\begin{lm}\label{N(n,n/2)}
If $n$ is odd, then $N(n,n/2)\neq 0$, where $N$ is defined in Corollary \ref{corQ}.
\end{lm}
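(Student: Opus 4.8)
The plan is to substitute $k = n/2$ directly into the polynomial $N(n,k)$ from Corollary \ref{corQ}. Although $n/2$ is not an integer when $n$ is odd, the quantity $N(n,k)$ is a genuine polynomial in $k$ — it is a finite sum of products $(k-1)_i (2k-i-1)_{n-i}$ of falling factorials in $k$, with constant coefficients $(-1)^{i+1} C_{i,n}$ — so this evaluation is legitimate.

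The key observation is that the substitution collapses the sum to a single term. At $k = n/2$ we have $2k = n$, so the factor $(2k-i-1)_{n-i}$ becomes $(n-i-1)_{n-i}$. This is a falling factorial with $n-i$ factors, running from $n-i-1$ down to $(n-i-1)-(n-i)+1 = 0$; since its final factor is $0$, the entire product vanishes whenever $0 \le i \le n-1$. Hence the only surviving term is $i = n$, and (using $(2k-n-1)_0 = 1$) we get
\[
N(n, n/2) = (-1)^{n+1}\, C_{n,n}\,(n/2 - 1)_n .
\]

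It then remains to verify that each factor is nonzero. For $C_{n,n}$, setting $i = n$ in the definition (\ref{C_{i,n}}) gives $C_{n,n} = \tfrac{1}{n+1}\sum_{j=0}^{n+1}(-1)^{n+1-j}\binom{n+1}{j}j^{n+1} = \tfrac{(n+1)!}{n+1} = n!$, by the standard identity that the $(n+1)$-st forward difference of $x^{\,n+1}$ at $0$ equals $(n+1)!$. For the falling factorial, $(n/2-1)_n = \prod_{j=1}^{n}\bigl(n/2 - j\bigr)$; because $n$ is odd, every factor $n/2 - j$ is a nonzero half-integer, so the product is nonzero. Finally $(-1)^{n+1} = 1$ since $n$ is odd. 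Combining these yields $N(n, n/2) = n! \cdot (n/2-1)_n \neq 0$.

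The main (and essentially only) obstacle is recognizing the vanishing of $(2k-i-1)_{n-i}$ at $k = n/2$ for $i < n$, which is what reduces an apparently complicated alternating sum to a single, explicitly computable term. Everything after that is routine verification. It is worth noting where the parity hypothesis enters: it is used precisely to guarantee that the surviving falling factorial $(n/2-1)_n$ avoids the integer zeros it would hit were $n$ even — consistent with the remark preceding the lemma that $N(n,k)$ is forced to be identically zero for even $n$.
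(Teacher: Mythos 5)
Your proof is correct and follows essentially the same route as the paper: substitute $k=n/2$, observe that the falling factorial $(2k-i-1)_{n-i}$ kills every term with $i<n$, and conclude from $C_{n,n}=n!$ and the fact that $n/2-j$ is a nonzero half-integer for odd $n$. The only difference is that you spell out the collapse of the sum and the identity $C_{n,n}=n!$, which the paper states without detail (citing Worpitzky for the latter).
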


\begin{proof}
We have
\[N(n,n/2) = (-1)^{n+1} C_{n,n}\left(\frac{n}{2}-1\right)\left(\frac{n}{2}-2\right)\left(\frac{n}{2}-3\right)\cdots \left(\frac{n}{2}-n\right).\]
Since (see, e.g., \cite[p. 225]{W}) $C_{n,n} = n!\neq 0$  and $n$ is odd, $N(n,n/2)\neq 0$ (Note, if $n$ is even, then the quantity is zero, as expected).  
\end{proof}

\subsubsection{Main Results}
We are now in a position to state and prove our main result.
\begin{thm}
Suppose 
\[
\{\gamma_k\}_{k=0}^{\infty} = \{k^n+ b_1k^{n-1}+ b_2 k^{n-2}+ \cdots + b_n\}_{k=0}^{\infty} \qquad (n\in\mathbb{N}\cup\{0\}, b_i\in\mathbb{R})
\] 
is a multiplier sequence for the Chebyshev basis. Then $n$ is even and $a_i = 0$ whenever $i$ is odd. I.e., the polynomial interpolating the sequence $\gamma_k$ must have no odd powers. It must be an even function. 
\end{thm}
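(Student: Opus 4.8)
The plan is to argue by contradiction. Suppose the interpolating polynomial $p(k)=\sum_{j=0}^{n}c_jk^j$ (with $c_n=1$, $c_{n-i}=b_i$) has at least one nonzero odd-power coefficient, and let $m$ be the largest odd index with $c_m\neq 0$. The map $\{\gamma_k\}\mapsto Q_{2k}(0)$ is linear (the operator $K$ depends linearly on $\{\gamma_k\}$, and $Q_{2k}(0)=\frac{1}{(2k)!}[K[x^{2k}]]_{x=0}$ depends linearly on $K$), so the $Q_{2k}(0)$ for $p$ are the corresponding linear combination of those for the pure powers $\gamma_k=k^j$. By Subsection \ref{subsubsectioneven}, each even power yields a finite-order differential operator, so its contribution to $Q_{2k}(0)$ vanishes for all sufficiently large $k$; hence only the odd indices $j\le m$ survive in the large-$k$ regime.

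Substituting the closed form of Corollary \ref{corQ} for each surviving odd power and factoring out the common positive quantity, I would write, for all large $k$,
$$Q_{2k}(0)=P(k)\,S(k),\qquad P(k)=\frac{2^{1-2k}}{(2k)!}\binom{2k}{k-1}(k+1)>0,\qquad S(k)=\sum_{\substack{j=1\\ j\ \mathrm{odd}}}^{m}c_j\,\frac{2^{j}N(j,k)}{(2k)_{j+1}}.$$
Since $P(k)>0$ for $k\ge 1$, the sign of $Q_{2k}(0)$ agrees with that of the rational function $S(k)$. The argument then rests on a dichotomy: $S$ is a rational function of $k$, so either $S\equiv 0$ or $S$ has constant sign for all sufficiently large real (hence integer) $k$. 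In the latter case $Q_{2k}(0)$ keeps a fixed sign, forcing $Q_{2k}(0)Q_{2k+2}(0)>0$ for large $k$, which contradicts Corollary \ref{coralt}. Thus being a multiplier sequence forces $S\equiv 0$.

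It remains to show $S\not\equiv 0$ whenever a nonzero odd coefficient is present, which I expect to be the main obstacle. Placing $S$ over the common denominator $(2k)_{m+1}$, each term with $j<m$ picks up the polynomial factor $\prod_{\ell=j+1}^{m}(2k-\ell)$, which contains $(2k-m)$ and so vanishes at the half-integer $k=m/2$, whereas the $j=m$ term contributes numerator $c_m2^mN(m,k)$. Evaluating the combined numerator at $k=m/2$, every $j<m$ term drops out and only $c_m2^mN(m,m/2)$ remains; by Lemma \ref{N(n,n/2)} this is nonzero since $m$ is odd and $c_m\neq 0$, while the single factor $(2k-m)$ in $(2k)_{m+1}$ makes $k=m/2$ a genuine (simple) pole of $S$. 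A rational function with a pole is not identically zero, so $S\not\equiv 0$ and the contradiction stands. Hence $p$ has no odd-power terms, and since $c_n=1\neq 0$ the index $n$ cannot be odd; therefore $n$ is even and $p$ is an even function, as claimed.
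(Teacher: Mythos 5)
Your proposal is correct and follows essentially the same route as the paper's proof: discard the even powers via Subsection \ref{subsubsectioneven}, express the surviving odd-power contributions to $Q_{2k}(0)$ through Corollary \ref{corQ}, put everything over the common denominator $(2k)_{m+1}$, and evaluate the resulting numerator at the half-integer $k=m/2$ so that only the top odd term survives and Lemma \ref{N(n,n/2)} gives non-vanishing, whence $Q_{2k}(0)$ has eventually constant sign, contradicting Corollary \ref{coralt}. The only (cosmetic) difference is that you track the largest odd index $m$ with nonzero coefficient explicitly, which is slightly more careful than the paper's presentation, where the reduced polynomial is taken to be monic of odd degree $n$.
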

\begin{proof}
We focus our attention on the sequence $Q_{2k}(0)$ for large values of $k$. As seen in section \ref{subsubsectioneven}, we may omit any even powers in the sequence $\{\gamma_k\}_{k=0}^{\infty}$. What remains is a sequence of the form
\begin{equation*}
\gamma_k = k^{n}+ b_{2}k^{n-2}+ b_4k^{n-4}+ \cdots +b_{n-1} k,
\end{equation*}
where we are assuming that $n$ is odd.
For this sequence, equation (\ref{Q2k}) becomes
\begin{equation*}
Q_{2k}(0) = \frac{2^{1-2k}}{(2k)!}\left[\sum_{i=1}^{k}(-1)^i\binom{2k}{k-i}\gamma_{2i} \right]
\end{equation*}
or, in light of Lemma \ref{nQA},
\[
Q_{2k}(0) = \frac{2^{1-2k}}{(2k)!}\left[A(n,k)+ b_2A(n-2,k)+ b_4A(n-4,k)+ \cdots b_{n-1}A(1,k) \right].
\]
Applying Proposition \ref{propQ}, we obtain
\[
Q_{2k}(0)=\frac{2^{1-2k}}{(2k)!}\binom{2k}{k-1}(k+1)\left[\frac{2^n}{(2k)_{n+1}}N(n,k)+\frac{2^{n-2}b_2}{(2k)_{n-1}}N(n-2,k)+\cdots+\frac{2^{1}b_{n-1}}{(2k)_{1}}N(1,k)\right]
\]
Factoring out $(2k)_{n+1}$, we see that the sign of $Q_{2k}(0)$ will be completely determined by the expression 
\[2^nN(n,k)+2^{n-2}(2k-n)(2k-n+1)b_2N(n-2,k)+\cdots+2^{1}(2k-n)^{(n-1)}b_{n-1}N(1,k),\]
which is a polynomial in the variable $k$. We claim that this polynomial is not identically zero. Indeed, by setting $k=n/2$ the expression reduces to $2^nN(n,n/2)$ which is non-zero by Lemma \ref{N(n,n/2)}. Therefore, for all sufficiently large values of $k$, this polynomial in $k$ must be strictly positive or strictly negative. It follows that the same is true of the sequence $Q_{2k}(0)$. Therefore, the result follows from Corollary \ref{coralt}. 
\end{proof}

We can now draw some conclusions about geometric sequences as well. First, we need two known results. 

\begin{thm}\cite[Theorem 2]{FTW} Let $B = \{b_k(x)\}_{k=0}^{\infty}$ and $Q = \{q_k(x)\}_{k=0}^{\infty}$ be simple sets of polynomials. If there exists an $\alpha>1$ such that both $\{\alpha^k\}_{k=0}^{\infty}$ and $\{\alpha^{-k}\}_{k=0}^{\infty}$ are $B$-multiplier sequences, then every $Q$-multiplier sequence is also a $B$-multiplier sequence. 
\end{thm}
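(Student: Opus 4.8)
The plan is to prove the two inclusions
\[
(Q\text{-multiplier sequences}) \subseteq (\text{classical multiplier sequences}) \subseteq (B\text{-multiplier sequences}),
\]
where ``classical'' refers to the standard monomial basis $\{x^k\}$. Throughout I write $\Lambda^C_\gamma$ for the diagonal operator $c_k \mapsto \gamma_k c_k$ attached to a basis $C = \{c_k\}$, and I use two standing facts: a composition of hyperbolicity-preserving operators is hyperbolicity preserving, and a coefficientwise limit of real-rooted polynomials of bounded degree is again real-rooted (a Hurwitz-type closedness). The engine in both inclusions is the same: I conjugate a known hyperbolicity-preserving diagonal operator by a one-parameter family of ``dilations'' and let the parameter run to infinity, so that the conjugates converge, coefficient by coefficient, to the diagonal operator I want.

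For the first (unconditional) inclusion I would use the variable dilation $V_\lambda \colon p(x) \mapsto p(\lambda x)$, which is hyperbolicity preserving for every $\lambda > 0$. Given a $Q$-multiplier sequence $\gamma$, the operator $V_\lambda\, \Lambda^Q_\gamma\, V_{1/\lambda}$ is a composition of hyperbolicity-preserving maps, hence hyperbolicity preserving. Expanding $x^m = \sum_{k \le m} t_{mk} q_k$ and $q_k(\lambda x) = \sum_{i \le k} s_{ki}\lambda^i x^i$, a direct computation gives that the coefficient of $x^i$ in $V_\lambda \Lambda^Q_\gamma V_{1/\lambda}\, x^m$ equals $\lambda^{i-m}\sum_{k=i}^{m}\gamma_k t_{mk}s_{ki}$; as $\lambda \to \infty$ every term with $i < m$ is damped to $0$, while the $i = m$ term stabilizes to $\gamma_m$ (since $t_{mm}s_{mm} = 1$). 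Thus $V_\lambda \Lambda^Q_\gamma V_{1/\lambda} \to \Lambda^{\mathrm{std}}_\gamma$ coefficientwise, and Hurwitz closedness forces $\Lambda^{\mathrm{std}}_\gamma$ to be hyperbolicity preserving; that is, $\gamma$ is a classical multiplier sequence.

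For the second inclusion I would run the same scheme with the $B$-geometric operators supplied by the hypothesis, and here both directions are essential: $\{\alpha^k\}$ and $\{\alpha^{-k}\}$ being $B$-multiplier sequences makes each $\Lambda^B_{\alpha^{m}}$ and $\Lambda^B_{\alpha^{-m}}$ hyperbolicity preserving. Given a classical multiplier sequence $\gamma$, the operator $\Lambda^B_{\alpha^{m}}\, \Lambda^{\mathrm{std}}_\gamma\, \Lambda^B_{\alpha^{-m}}$ is then hyperbolicity preserving, and on $b_n$ it equals $\sum_{k \le n}\alpha^{m(k-n)} d_{nk}\, b_k$, where the $d_{nk}$ are the $B$-coordinates of $\Lambda^{\mathrm{std}}_\gamma b_n$ (a polynomial of degree $\le n$, so the sum truncates at $k = n$, with $d_{nn} = \gamma_n$). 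Because $\alpha > 1$, letting $m \to \infty$ damps every $k < n$ term and leaves $d_{nn} b_n = \gamma_n b_n$, so $\Lambda^B_{\alpha^m}\Lambda^{\mathrm{std}}_\gamma \Lambda^B_{\alpha^{-m}} \to \Lambda^B_\gamma$ coefficientwise; Hurwitz closedness then yields that $\Lambda^B_\gamma$ is hyperbolicity preserving. Composing the two inclusions gives the theorem.

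The main obstacle, and the part deserving the most care, is arranging each conjugation so that the limit actually converges rather than blows up: one must place the expanding dilation on the correct side, damping (not amplifying) the off-diagonal contributions, which in the second inclusion is precisely why both $\{\alpha^k\}$ and $\{\alpha^{-k}\}$ — together with the choice $\alpha > 1$ — are required. A secondary technical point is the passage to the limit: the Hurwitz argument must accommodate a possible drop in degree (for instance when some $\gamma_n = 0$), which is handled by working within the set of real-rooted polynomials of degree at most $\deg p$, which is closed under coefficientwise convergence.
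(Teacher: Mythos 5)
The paper does not prove this statement: it is imported verbatim from \cite[Theorem 2]{FTW} with a citation only, so there is no in-paper argument to compare against. Your reconstruction is correct, and it is essentially the argument of the cited source: the unconditional inclusion ($Q$-multiplier sequences are classical multiplier sequences, obtained by conjugating $\Lambda^Q_\gamma$ with the dilations $p(x)\mapsto p(\lambda x)$ and passing to the limit $\lambda\to\infty$) composed with the conditional inclusion (classical multiplier sequences are $B$-multiplier sequences, obtained by conjugating with the $m$-fold compositions of the two geometric $B$-operators and letting $m\to\infty$, which is exactly where $\alpha>1$ and the availability of \emph{both} $\{\alpha^k\}$ and $\{\alpha^{-k}\}$ enter). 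The points that need care are the ones you already flag and handle correctly: the expanding map must sit on the damping side of the conjugation; $t_{mm}s_{mm}=1$ and $d_{nn}=\gamma_n$ hold because $B$ and $Q$ are simple sets (degree of the $k$th element equal to $k$); and the Hurwitz-type limit must be taken inside the closed set of hyperbolic polynomials of degree at most $\deg p$, so that degree drops and the zero polynomial are admissible.
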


\begin{thm}\cite[Theorem 2]{PILMS} If a multiplier sequence for the Legendre basis can be interpolated by a polynomial $p$, then $p(x) = h(x^2+x)$ for some polynomial $h\in \mathbb{R}[x]$.
\end{thm}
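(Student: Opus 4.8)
The plan is to prove the Legendre statement as a direct analog of the Chebyshev main result, running the same machinery with $\{P_n\}$ in place of $\{T_n\}$. The structural fact that replaces the Chebyshev identity $k^2 T_k = (xD + (x^2-1)D^2)T_k$ is the Legendre differential equation $k(k+1)P_k = \bigl((x^2-1)D^2 + 2xD\bigr)P_k$, so the Legendre operator $L := (x^2-1)D^2 + 2xD$ has eigenvalue $k^2+k$ on $P_k$. Writing $\kappa = k + \tfrac12$, the eigenvalue becomes $\kappa^2 - \tfrac14$, which is symmetric in $\kappa$; this is precisely why ``even in $k$'' for Chebyshev is replaced by ``symmetric about $-\tfrac12$,'' i.e.\ a polynomial in $k^2+k$, for Legendre. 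First I would establish the Legendre analog of Proposition \ref{Glem}: combining Lemma \ref{Q0lem}, the known expansion of $x^n$ in the Legendre basis, and the values $P_n(0)$ (which vanish for odd $n$), one writes $Q_{2k}(0)$ as an explicit alternating binomial-type sum in the $\gamma_{2i}$, so that by Proposition \ref{Glp} and the even-function argument behind Corollary \ref{coralt} it suffices to force $Q_{2k}(0)$ to be eventually of one sign whenever $p$ is not a polynomial in $x^2+x$.

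Next comes the decomposition step. Every polynomial splits uniquely as a part symmetric about $x=-\tfrac12$, which is exactly a polynomial $h(x^2+x)$, plus an antisymmetric part, which is spanned by the generators $(2x+1)(x^2+x)^j$. Mirroring Section \ref{subsubsectioneven}, any symmetric piece $h(k^2+k)$ corresponds to the finite-order differential operator $h(L)$ and hence contributes only finitely many nonzero terms to the sequence $Q_{2k}(0)$; such terms are irrelevant to the tail. I would therefore reduce to the case in which $\{p(k)\}$ equals its antisymmetric part, assumed nonzero, and aim to show that its contribution to $Q_{2k}(0)$ neither vanishes asymptotically nor changes sign for large $k$.

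The main obstacle is the Legendre counterpart of the odd-powers subsection, namely producing a closed form for the building-block sums (the analog of $A(n,k)$) and proving that, divided by the appropriate binomial coefficient, they reduce to a rational function of $k$ that is not identically zero on the antisymmetric generators. This is the hypergeometric-and-Worpitzky computation of Lemmas \ref{lemsum}--\ref{N(n,n/2)}, redone with the Legendre expansion coefficients and with the evaluation point dictated by $P_{2k}(0)$ rather than $T_{2k}(0)$; I expect the bookkeeping of these coefficients, together with the identification of a distinguished value of $k$ at which the leading sum is manifestly nonzero, to be the delicate part. Once this non-vanishing and a sign computation at that value are secured, the leading antisymmetric term dominates $Q_{2k}(0)$ for large $k$, making the sequence eventually sign-definite, which contradicts membership in the Laguerre--P\'olya class by Corollary \ref{coralt}. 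Hence the antisymmetric part must vanish and $p(x) = h(x^2+x)$, as claimed.
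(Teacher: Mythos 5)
First, a point of comparison that matters here: the paper does not prove this statement. It is imported verbatim as \cite[Theorem 2]{PILMS} and used as a black box (together with \cite[Theorem 2]{FTW}) to rule out certain geometric sequences, so there is no internal proof to measure your attempt against. That said, your outline is in the right family: the cited result is in fact proved by the same symbol/``form and order'' machinery that the present paper adapts to the Chebyshev basis, with the Legendre differential equation $k(k+1)P_k = ((x^2-1)D^2+2xD)P_k$ playing the role of the Chebyshev equation and the symmetry about $k=-\tfrac{1}{2}$ explaining why the invariant is $k^2+k$. Your decomposition into a part symmetric about $-\tfrac{1}{2}$ (exactly the polynomials $h(x^2+x)$, which yield finite-order operators $h(L)$ and hence only finitely many nonzero $Q_{2k}(0)$) plus an antisymmetric part spanned by $(2x+1)(x^2+x)^j$ is the correct analogue of Section \ref{subsubsectioneven}, and the target --- eventual sign-definiteness of $Q_{2k}(0)$, contradicting Corollary \ref{coralt} --- is the right one.

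The genuine gap is that the entire technical core of the argument is deferred rather than carried out. You explicitly label the Legendre analogues of Lemmas \ref{lemsum}--\ref{N(n,n/2)} as ``the delicate part'' that you ``expect'' to work, but you do not produce the Legendre expansion coefficients of $x^n$, the values $P_{2k}(0)$, a closed form for the analogue of $A(n,k)$, or a proof that the analogue of $N(n,k)$ is not identically zero on the antisymmetric generators. In the Chebyshev case these steps are the bulk of the paper and hinge on identities specific to that basis: the hypergeometric representation in Lemma \ref{lemsum}, the Gauss evaluation at the point $x=-1$ forced by $T_{2k}(0)=(-1)^k$ in Lemma \ref{g-1}, and the fortunate cancellation at the distinguished value $k=n/2$ where the falling factorials $(k-1)_i$ kill all but the top term. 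For Legendre the expansion coefficients, the evaluation point (coming from $P_{2k}(0)=(-1)^k\binom{2k}{k}4^{-k}$), and the resulting recursion are all different, and there is no a priori guarantee that the same Worpitzky structure or the same distinguished-value trick survives; each must be reverified. A second, smaller gap: you assert that ``the leading antisymmetric term dominates $Q_{2k}(0)$ for large $k$,'' but in the Chebyshev proof this is not a domination argument at all --- one shows the full linear combination of the $N(\cdot,k)$ is a polynomial in $k$ that is not identically zero (via the evaluation at $k=n/2$), hence eventually of one sign. As written, your proposal is a plausible research plan consistent with the known proof strategy, but not yet a proof.
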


Since there are clearly multiplier sequences for the Legendre basis which are not multiplier sequences for the Chebyshev basis (and vice versa) we can conclude that there does not exist an $\alpha>1$ such that both $\{\alpha^k\}_{k=0}^{\infty}$ and $\{\alpha^{-k}\}_{k=0}^{\infty}$ are multiplier sequences for the Chebyshev basis. In fact, much more is true. We detail this in our next theorem which characterizes geometric multiplier sequences for the Chebyshev basis.
\begin{thm}
Let $r\in\mathbb{R}$. Then $\{r^k\}_{k=0}^{\infty}$ is a multiplier sequence for the Chebyshev basis if and only if $r\in\{-1,0,1\}.$
\end{thm}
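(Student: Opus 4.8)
The plan is to prove sufficiency of $r\in\{-1,0,1\}$ directly, and to obtain necessity by studying the quantities $Q_{2k}(0)$ of Proposition~\ref{Glem}.

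For sufficiency: $r=1$ gives the identity operator; $r=0$ gives the sequence $\{1,0,0,\dots\}$, whose operator sends $\sum a_kT_k$ to the constant $a_0$, which is hyperbolic; and $r=-1$ gives $\{(-1)^k\}$, whose operator is the reflection $p(x)\mapsto p(-x)$ since $T_k(-x)=(-1)^kT_k(x)$, and reflection preserves hyperbolicity. For necessity I will use that Chebyshev multiplier sequences are closed under pointwise products (a composition of diagonal hyperbolicity preservers is again one). Multiplying by $\{(-1)^k\}$ shows that $\{r^k\}$ is a multiplier sequence iff $\{|r|^k\}$ is, so I may assume $r>0$ and $r\neq1$.

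Setting $s=r^2$ and using \eqref{Q2k}, one finds $Q_{2k}(0)=\frac{2^{1-2k}}{(2k)!}\,B_k(s)$ with $B_k(s)=\tfrac12\binom{2k}{k}+\sum_{i=1}^{k}(-1)^i\binom{2k}{k-i}s^i$, a degree-$k$ polynomial that vanishes at $s=1$. Everything is governed by the signs of $\{B_k(s)\}_k$. When $0<s<1$ or $1<s<3$ a direct computation gives $B_1(s)B_2(s)>0$ (on $1<s<3$ both are negative), hence $Q_2(0)Q_4(0)>0$, and Corollary~\ref{coralt} shows $\{r^k\}$ is not a multiplier sequence; a similar consecutive same-sign pair, found at a larger index, settles a further range of $s$ just beyond $3$.

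The essential difficulty is large $s$. Here an asymptotic analysis of the alternating binomial–power sum gives $B_k(s)\sim c\,(-1)^k\big(\tfrac{(s-1)^2}{s}\big)^k$, so as soon as $(s-1)^2>s$ the $Q_{2k}(0)$ eventually \emph{alternate} in sign—precisely the pattern forced on an even function of $\lp$—and Corollary~\ref{coralt} no longer applies. To proceed I will invoke the stronger necessary conditions coming from $\lp$ membership: since $G_K(0,y)=\sum_k Q_{2k}(0)y^{2k}$ would have to lie in $\lp$, the function $g(u)=\sum_k Q_{2k}(0)u^k$ must lie in $\lp$ with only nonnegative zeros, and hence satisfy the Laguerre–Turán inequalities $Q_{2k}(0)^2\ge\frac{k+1}{k}Q_{2k-2}(0)Q_{2k+2}(0)$ as well as the higher iterated Laguerre inequalities. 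The single Turán inequality already disposes of an intermediate range of $s$, but it too is satisfied for all large $s$, so the crux is to show that, for every remaining $s$, some higher-order inequality fails—equivalently, that $G_K(0,y)$ has a non-real zero. The explicit low-order values of $B_k(s)$ together with the cosine-type tail $g(u)\approx\cos(\tfrac12\sqrt{\mu u})$, $\mu=(s-1)^2/s$, strongly suggest that $g$ is a genuine, $\lp$-violating perturbation of a cosine, and I expect the main obstacle to be turning this into a uniform complex-zero (Rouché/argument-principle) estimate valid for all large $s$, with the finitely many intermediate values verified computationally. Combining the three ranges yields the theorem.
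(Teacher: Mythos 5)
Your sufficiency argument ($r\in\{-1,0,1\}$) is correct and coincides with the paper's. The necessity direction, however, has a genuine gap that you yourself flag: for $s=r^2$ large (specifically once $(s-1)^2>s$), the quantities $Q_{2k}(0)$ eventually alternate in sign, so Corollary~\ref{coralt} gives nothing, and you concede that the single Tur\'an inequality is also satisfied there. At that point you have exhausted the necessary conditions you can verify, and the remaining step --- showing that some higher iterated Laguerre inequality fails, or that $G_K(0,y)$ has a non-real zero via a Rouch\'e estimate uniform in $s$ --- is only announced (``I expect the main obstacle to be\dots''), not carried out. Since $G_K(0,y)\in\mathcal{L-P}$ is merely a \emph{necessary} condition for $K$ to preserve hyperbolicity, a strategy built entirely on that function cannot close unless you actually prove $G_K(0,y)\notin\mathcal{L-P}$ for every $r\notin\{-1,0,1\}$; it is not even clear a priori that this is true for large $r$, in which case the whole route would be a dead end, not just an unfinished one. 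A secondary issue: the ``finitely many intermediate values verified computationally'' remark does not cover what is in fact an interval of real values of $s$.

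The paper avoids all of this with a one-step elementary argument: apply $K$ to the single hyperbolic cubic $(x+r/3)^3$, expand it in the Chebyshev basis, multiply the coefficients by $r^k$, convert back to the monomial basis, and compute the discriminant of the resulting cubic, which turns out to be $-27r^6(r^2-1)^2/16$. This is strictly negative unless $r\in\{-1,0,1\}$, so the image has non-real zeros and the sequence is not a multiplier sequence --- uniformly in $r$, with no asymptotics and no case split. The lesson is that for ruling out a one-parameter family of sequences, a well-chosen low-degree test polynomial is often far more efficient than the symbol $G_K(0,y)$, whose necessary conditions can be too weak to detect the failure. I would recommend replacing your necessity argument with a test-polynomial computation of this kind; as it stands, your proposal does not constitute a proof.
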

\begin{proof}
Let $K$ be the operator corresponding to the sequence $\{r^k\}_{k=0}^{\infty}$ as defined in (\ref{Tms}). A calculation shows that 
\begin{align*}
K[(x+r/3)^3] &=K\left[\frac{1}{4}T_3(x)+ \frac{r}{2} T_2(x)+ \left(\frac{3}{4}+ \frac{r^2}{3}\right)T_1(x)+ \left(\frac{r}{2}+ \frac{r^3}{27}\right)T_0(x)\right]\\
&=\frac{r^3}{4}T_3(x)+ \frac{r^3}{2}T_2(x)+ \left(\frac{3r}{4}+ \frac{r^3}{3}\right)T_1(x)+ \left(\frac{r}{2}+ \frac{r^3}{27}\right)T_0(x)\\
&= r^3 x^3+ r^3 x^2 + \left(\frac{3}{4}r - \frac{5}{12}r^3\right)x +\frac{1}{2}r - \frac{25}{54}r^3.
\end{align*}
It is known (see, e.g., \cite[p. 36-40]{cajori}) that a cubic function $ax^3+bx^2+cx+d$ has non-real zeros if and only if the quantity 
\[
\Delta=b^2 c^2 - 4 a c^3 - 4 b^3 d - 27 a^2 d^2 + 18 a b c d
\]
is negative. For $K[(x+r/3)^3]$, we obtain $\Delta = -27 r^6 (r^2-1)^2/16$. It follows that if $\{r^k\}_{k=0}^{\infty}$ is a multiplier sequence for the Chebyshev basis, then $r\in\{-1,0,1\}$. 

Conversely, the sequences $\{0^k\}_{k=0}^{\infty}$ and $\{1^k\}_{k=0}^{\infty}$ are clearly multiplier sequences for any basis. The sequence $\{(-1)^k\}_{k=0}^{\infty}$ is also a multiplier sequence for the Chebyshev basis as a consequence of the symmetry relation $T_n(-x) = (-1)^n T_n(x)$ (see, e.g., \cite[p. 2-3]{MH}. Indeed, the corresponding operator $K$ in this case satisfies $T[p(x)] = p(-x)$ and so $T[p]$ has the same number of real zeros as $p$.
\end{proof}

\section{Conclusions and Further Work}
We have demonstrated that if a multiplier sequence for the Chebyshev basis can be interpolated by a polynomial, then the polynomial must be an even function. This result closely mirrors one that was obtained for the Legendre polynomials in \cite{PILMS}. The next point of research beyond this could be to analyze the more general Gegenbauer and Jacobi polynomials (see, for example, \cite{R} for the definition of these polynomials) in the same fashion. In particular, based on the results for the Legendre and Chebyshev bases, we pose the following problem.
\begin{prob}
Is it true that if a multiplier sequence for the Jacobi polynomial basis $\{P_k^{(\alpha,\beta)}\}_{k=0}^{\infty}$ can be interpolated by a polynomial $p$, then there must exist $h\in\mbb{R}[x]$ such that $p(x) = h(x(x+\alpha+\beta+1))$?
\end{prob}
One could also attempt to determine which even polynomials $p$ generate a multiplier sequence for the Chebyshev basis. To date, and to the best of our knowledge, only the quadratic multiplier sequences have been characterized for the Chebyshev basis. Finally, one could attempt to find a complete characterization of multiplier sequences for the Jacobi polynomial basis. 

\section{Acknowledgement} The authors are grateful to the anonymous referee for their detailed and helpful comments, and for pointing out that the results of \cite{FTW} were relevant to this work.

\end{document}